\newtheorem{thm}{Theorem}[section]  
\newtheorem{lemma}[thm]{Lemma}
\newtheorem{prop}[thm]{Proposition}
\newtheorem{claim}{Claim}[thm]
\theoremstyle{definition}
\newtheorem{remark}[thm]{Remark}
\newtheorem{question}[thm]{Question}
 \newtheorem{defn}[thm]{Definition}
\def\codim{\operatorname{codim}}
\def\c1{\operatorname{c_1}}
\def\c2{\operatorname{c_2}}
\def\Sym{\operatorname{Sym}}
\def\PP{{\mathbb P}}
\def\DD{{\mathbb D}}
\def\M{{\mathcal M}}
\def\N{{\mathcal N}}
\def\O{{\mathcal O}}
\def\I{{\mathcal J}}
\def\E{{\mathcal E}}
\def\T{{\mathcal T}}
\def\F{{\mathcal F}}
\def\C{{\mathcal C}}
\def\Y{{\mathcal Y}}
\def\X{{\mathcal X}}
\def\FF{{\mathbb F}}
\def\e{\mathfrak{e}}
\def\s{\mathfrak{s}}
\def\f{\mathfrak{f}}
\def\c{\mathfrak{c}}
\def\cong{\simeq}
\def\leq{\leqslant}
\def\geq{\geqslant}
\def\+{\oplus}               
\def\*{\otimes}                  
\def\Bl{\operatorname{Bl}}
\def\Sing{\operatorname{Sing}}
\def\Bl{\operatorname{Bl}}
\begin{document}

\title{Severi varieties on blow--ups of the symmetric square of an elliptic curve} 

\author[C.~Ciliberto]{Ciro Ciliberto}
\address{Ciro Ciliberto, Dipartimento di Matematica, Universit{\`a} di Roma Tor Vergata, Via \
della Ricerca Scientifica, 00173 Roma, Italy}
\email{cilibert@mat.uniroma2.it}

\author[T.~Dedieu]{Thomas Dedieu}
\address{Thomas Dedieu,                                                                     
Institut de Math{\'e}matiques de Toulouse--UMR5219,                                          
Universit{\'e} de \linebreak Toulouse--CNRS,                                                            
UPS IMT, F-31062 Toulouse Cedex 9, France}
\email{thomas.dedieu@math.univ-toulouse.fr}

\author[C.~Galati]{Concettina Galati}
\address{Concettina Galati, Dipartimento di Matematica e Informatica, Universit{\`a} della \linebreak Calabria, via P. Bucci, cubo 31B, 87036 Arcavacata di Rende (CS), Italy}
\email{galati@mat.unical.it}

\author[A.~L.~Knutsen]{Andreas Leopold Knutsen}
\address{Andreas Leopold Knutsen, Department of Mathematics, University of Bergen, Postboks 7800,
5020 Bergen, Norway}
\email{andreas.knutsen@math.uib.no}

\date{\today}

\begin{abstract}  We prove that certain Severi varieties of nodal curves of positive genus  on general blow--ups of the twofold symmetric product of a general elliptic curve are non--empty and smooth of the expected dimension. This result, besides its intrinsic value,  is an important  preliminary step for the proof of nonemptiness of Severi varieties on general Enriques surfaces in \cite{CDGK}.  

\end{abstract}

\keywords{Severi varieties, nodal curves, degenerations, elliptic ruled surfaces}
\subjclass[2010]{14D06,14H10, 14H20}

\maketitle

\section{Introduction}

Let $S$ be a smooth, projective complex surface and $\xi \in {\rm Num}(S)$. Let 
$$p_a(\xi)=\frac{1}{2}\xi \cdot (\xi+K_S)+1$$ 
be the \emph{arithmetic genus} of $\xi$.  If $L$ is a line bundle or divisor on $S$ with class $\xi$ in  ${\rm Num}(S)$ we set $p_a(L)=p_a(\xi)$. We  denote by $V^\xi(S)$ the  locus in the  Hilbert scheme   of $S$ parametrizing the  curves $C$ on $S$ such that the class of $\mathcal O_S(C)$ in ${\rm Num}(S)$ coincides with $\xi$. Assume that $L$ is a line bundle or divisor on $S$ with class $\xi$ in  ${\rm Num}(S)$ and that $p_a(L)\geq 0$. For any integer $\delta$ satisfying $0 \leq \delta \leq p_a(\xi)$ we denote by
$V^\xi_{\delta}(S)$ the {\it Severi variety} parametrizing irreducible $\delta$--nodal curves contained in $V^\xi(S)$.  This is a possibly empty locally closed set in $V^\xi(S)$. 

Let $V$ be an irreducible component of $V^\xi(S)$ and, for any $\delta$ such that $0 \leq \delta \leq p_a(\xi)$, let  $V_\delta$ be an irreducible component of $V\cap V^\xi_{\delta}(S)$. It is well known that 
\begin{equation}\label{eq:expdim1a}
\dim(V_\delta)\geq \dim (V)-\delta,
\end{equation}
where the right hand side is called the {\it expected dimension} of $V_\delta$. Moreover if the equality holds in \eqref {eq:expdim1a}, then,  for all $0\leq \delta' \leq \delta$,  the
closure of the intersection of $V^\xi_{\delta'}(S)$ with $V$ contains $V_\delta$,  and any of its components whose closure contains $V_\delta$ has the expected dimension $\dim(V)-\delta'$ (see \cite[Thm. 6.3]{ser2}).

Severi varieties were introduced by Severi in Anhang F of \cite{sev}, where he proved  that all Severi varieties
of irreducible $\delta$-nodal curves of degree $d$ in $\PP^2$ are nonempty and smooth of the expected dimension. Severi also claimed irreducibility of such varieties, but his proof contains a gap. The irreducibility was proved by Harris \cite{Ha} more than 60 years later.

Severi varieties on other surfaces have received much attention in recent years, especially in connection with enumerative formulas computing their degrees. 

Nonemptiness is known to hold for all Severi varieties associated to big and nef classes on Del Pezzo surfaces (as well as rational surfaces under certain assumptions) by \cite[Thms. 3-4]{GLS} and for Hirzebruch surfaces, a result implicitly contained in \cite[\S 3]{Ta}.
In both cases of Del Pezzo and Hirzebruch surfaces, all Severi
varieties are smooth of the expected dimensions, cf., e.g. \cite[Lemma
2.9]{Ta} or \cite[p.~45]{CS}. Moreover, all Severi varieties of
Hirzebruch surfaces are irreducible \cite{Ty},
and Severi varieties parametrizing rational curves on Del Pezzo
surfaces of degrees $\geq 2$ are irreducible as well \cite{Te}; the
same holds true for {\it general} Del Pezzo surfaces of degree one,
except for the Severi variety parametrizing rational curves in the  anticanonical class \cite[Cor.~6.4]{Te2}.

On a general primitively polarized $K3$ surface $(S,\xi)$, all Severi varieties
$V^{n\xi}_{\delta}(S)$, where $0 \leq \delta \leq p_a(n\xi)$, are
nonempty by a result of Mumford \cite{MM} if $n=1$ and of Chen
\cite{chen} for all $n$; moreover, all components are always smooth of
the expected dimension $p_a(n\xi)-\delta$
\cite{Ta2,CS}.
The irreduciblity question (for $\delta < p_a(n\xi)$) has been the
object of much attention,
see \cite{keilen,kemeny,CDirrid,ballico,Dballico},
and was recently solved in the
case $n=1$ for all $\delta \leq p_a(\xi)-4$
in the preprint \cite{BL-C}.

Similarly, on a general primitively polarized abelian surface $(S,\xi)$, all Severi varieties
$V^{n\xi}_{\delta}(S)$, where $0 \leq \delta \leq p_a(n\xi)-2$, are nonempty (by \cite{KLM} if $n=1$ and \cite{KL} in general) and smooth of the expected dimension $p_a(n\xi)-\delta$ \cite{LS}. Irreducibility does not hold: the various irreducible components in the case $n=1$ have  been described by Zahariuc \cite{Za}.

Very little is known on other surfaces, where problems such as nonemptiness, smoothness, dimension and irreducibility are regarded as very hard. In particular,
Severi varieties may have unexpected behaviour: examples are given in \cite{CC} of surfaces of general type with reducible Severi varieties, and also with components of  dimension different from the expected one.

In this paper we consider the case of blow--ups of a particular type of ruled surface over an elliptic curve.

Let $E$ be a general smooth irreducible projective curve of genus one and set $R:=\Sym^2(E)$. Let $\widetilde{R}$ be the blow--up of $R$ at any finite set of general points. Our main result in this paper shows that Severi varieties of a large class of line bundles on $\widetilde{R}$ are well-behaved:

\begin{thm} \label{thm:R11}  In the above setting, let $L$ be a line bundle on
  $\widetilde{R}$ verifying condition $(\star)$ (cf. Definition \ref{def:good}) and let $\xi$ be the class of $L$ in ${\rm Num}(\widetilde{R})$.
  Let $\delta$ be an integer satisfying $0 \leq \delta <p_a(L)$.
  Then $V_{\delta}^\xi(\widetilde{R})$ is nonempty and smooth with all components of the expected dimension $- L \cdot K_{\widetilde{R}}+p_a(L)-\delta-1$. 
\end{thm}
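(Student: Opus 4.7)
The plan is to follow the classical degeneration-and-smoothing strategy that has been successful for Severi varieties on $K3$ and abelian surfaces (see \cite{Ta2, CS, KLM, KL, LS}). The argument naturally divides into three stages: construction of a maximally nodal curve in $|L|$, partial smoothing to produce $\delta$-nodal curves for every $\delta < p_a(L)$, and a cohomological computation to conclude smoothness with expected dimension.

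The first task is to exhibit an irreducible curve $C_0 \in |L|$ with exactly $p_a(L) - 1$ nodes, i.e., of geometric genus $1$, which is the positive-genus analogue of a maximally nodal curve. To do this I would exploit the ruled structure of $R = \Sym^2(E)$ coming from the Abel--Jacobi morphism $R \to E$ by degenerating $\widetilde R$ in a one-parameter family $\mathcal{R} \to \Delta$ whose central fibre is a reducible surface $\widetilde R_0 = Y_1 \cup_D Y_2$, obtained by letting the elliptic base curve $E$ acquire a node (so that $R$ breaks into a union of rational ruled components glued along a rational double curve $D$). The components $Y_1,Y_2$ can then be taken to be blow-ups of Hirzebruch surfaces, for which the existence and smoothness of all Severi varieties of irreducible nodal curves is already known by the classical results of \cite{Ta, Ty} quoted in the introduction. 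On each $Y_i$ one constructs a curve in the appropriate restricted linear system with a prescribed number of nodes, and glues them transversally along $D$ so that the resulting limit curve lies in $|L|$ and has the desired number of nodes, including those coming from the intersection with $D$.

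One then applies a suitable smoothing result for reducible curves on the central fibre of the degeneration (in the spirit of Caporaso--Harris, or the smoothing techniques developed by Galati) to lift $C_0$ to an irreducible nodal curve on the general fibre $\widetilde R_t$, while controlling exactly which of its nodes are smoothed. By choosing to preserve any prescribed subset of $\delta$ nodes, one obtains curves in $V^\xi_\delta(\widetilde R)$ for every $0 \leq \delta < p_a(L)$, establishing nonemptiness. For smoothness and dimension, condition $(\star)$ should guarantee $H^1(\widetilde R, L) = H^2(\widetilde R, L) = 0$, so that by Riemann--Roch together with $\chi(\mathcal{O}_{\widetilde R}) = 0$ one has $\dim|L| = -L\cdot K_{\widetilde R}+p_a(L)-1$, which is the expected dimension stated in the theorem. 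By the standard deformation-theoretic criterion it then suffices to show that for any curve $C$ in the constructed Severi variety the set of nodes $N \subset C$ imposes independent conditions on $|L|$, equivalently $H^1(\widetilde R, \Ii_N \otimes L) = 0$. Using the exact sequence $0 \to L(-C) \to \Ii_N \otimes L \to \Ii_{N/C}\otimes L_{|C} \to 0$, this reduces to a vanishing statement on the normalisation of $C$, which follows from the numerical hypotheses in $(\star)$ and the strict inequality $\delta < p_a(L)$.

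The main obstacle is clearly the construction step. The degeneration of $\widetilde R$ must be chosen so that the restrictions $L_{|Y_i}$ lie in Severi varieties already known to be well-behaved, and the curves on the two components must be matched along $D$ with enough flexibility to produce exactly $p_a(L)-1$ nodes and a genuinely irreducible limit curve in $|L|$. The fact that the blow-up points on $\widetilde R$ are assumed general gives room for these adjustments, but keeping track of the multiplicities at the exceptional divisors during the degeneration, and ensuring compatibility with the numerical conditions built into $(\star)$, will require delicate combinatorial bookkeeping and is the technical heart of the proof.
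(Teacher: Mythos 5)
Your overall architecture matches the paper's: reduce to the maximal case $\delta=p_a(L)-1$ (geometric genus one), get smaller $\delta$ by partial smoothing, and obtain smoothness and dimension from a cohomological nonspeciality argument (the paper uses the normal sheaf $\N_\varphi$ of the normalization rather than independence of the conditions imposed by the nodes, but both are standard and condition (iv) of $(\star)$, i.e.\ $-L\cdot K_{\widetilde R}\geq 4>0$, is exactly what makes this work). The problem is that the construction step --- which you correctly identify as the technical heart, and which is essentially the entire content of the paper --- is both left undone and sketched along a line that rests on a false geometric premise. When the elliptic curve $E$ degenerates to a one-nodal rational curve $X_0$, the surface $\Sym^2(E)$ does \emph{not} break into two rational ruled components glued along a rational double curve: $\Sym^2(X_0)$ is \emph{irreducible} and non-normal, singular along the curve $X_P=\{x+P\}$, and its normalization is a blow-up of $\PP^2=\Sym^2(\PP^1)$. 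After resolving the single singular point of the total space one gets a central fiber $\widetilde S\cup\E$ with $\E\cong\FF_1$ and $\widetilde S$ still non-normal, obtained from a blow-up $W$ of $\PP^2$ by identifying two curves via a gluing map $\omega$. So there are no Hirzebruch components on which to invoke the Tannenbaum/Tyomkin results, and the matching condition along the "double curve" is not transversal gluing of two curves on two smooth components but an $\omega$-compatibility condition on a single curve in $W$.

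This difference is not cosmetic: it changes the node count. In the paper the limit curve is taken to meet each of the two identified curves $\widetilde X_{P_1},\widetilde X_{P_2}$ at a single unibranch point ($x_1$ and $x_2=\omega(x_1)$), so that its image acquires an $\alpha$-tacnode at the identified point; deforming to the general fiber turns this tacnode into $\alpha-1$ additional nodes (via the Galati--Knutsen machinery for $A_k$-singularities), and it is precisely $\delta_0+(\alpha-1)$ that equals $p_a(L)-1$. A naive transversal gluing with only ordinary nodes along the double locus, as in your sketch, has no mechanism to produce these extra $\alpha-1$ nodes, and you would also need to verify the dimension statements (your analogues of Claims \ref{cl:ugua} and \ref{cl:lisc}) that license the smoothing of the total space while preserving the prescribed singularities. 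Finally, the blown-up points must be specialized onto the $2$-torsion curve $T$ and followed into the limit (they land on the line $h_0$ through $2P_1$ and $2P_2$ in $\PP^2$); the rational curves with the required multiplicities at these points and total ramification conditions at $x_1,x_2$ are produced by an explicit projective construction (Lemma \ref{lemma:checazzo!} and Proposition \ref{prop:strafico}), none of which has a counterpart in your proposal. As it stands, the proposal establishes only the (easy, known) smoothness and dimension statements and leaves nonemptiness unproved.
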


The statement about smoothness and dimension follows from standard arguments of deformation theory, once non--emptiness has been proved, cf. Proposition \ref{prop:sed} below. 
Moreover we remark that, by what we said above, it suffices to prove Theorem \ref {thm:R11} for the maximal number of nodes, i.e., $\delta= p_a(L)-1$. This will follow from Proposition \ref {prop:R11} below, which treats the special case in which the blown--up points are in a special position. In \cite{CDGK} we will make use of Proposition \ref {prop:R11} in order to prove nonemptiness of Severi varieties on Enriques surfaces. 
The question of smoothness and dimension of Severi varieties on Enriques surfaces has been treated in  \cite{indam}.

The irreducibility question for $V_{\delta}^\xi(\widetilde{R})$ is not treated in this paper; thus, we pose:
\begin{question}
  Are the varieties $V_{\delta}^\xi(\widetilde{R})$ from Theorem \ref{thm:R11} irreducible?
\end{question}

The paper is organised as follows. In \S \ref {sec:prel} we recall some preliminaries concerning  twofold symmetric products  of elliptic curves. Section \ref {sec:deg} is devoted to recalling 
a degeneration of the symmetric product of a general elliptic curve studied in \cite{ck}. In \S \ref {sec:lemmas} we construct certain families of curves on some blow--ups of the projective plane that turn out to be useful in the proof of Proposition \ref {prop:R11}, which is proved by degeneration in \S \ref {sec:proof}.

\vspace{0.3cm} {\it Acknowledgements.} The authors acknowledge funding
from MIUR Excellence Department Project CUP E83C180 00100006 (CC),
project FOSICAV within the  EU  Horizon
2020 research and innovation programme under the Marie
Sk{\l}odowska-Curie grant agreement n.  652782 (CC, ThD),
 GNSAGA of INDAM (CC, CG), the Trond Mohn 
Foundation Project ``Pure Mathematics in Norway'' (ALK, CG) and grant 261756 of the Research Council of Norway (ALK).

\section{The twofold symmetric product of an elliptic curve}\label{sec:prel}

Let $E$ be a smooth irreducible projective  elliptic curve. Denote by $\+$ (and $\ominus$) the group operation on $E$  and by $e_0$ the neutral element. Let $R:=\Sym^2(E)$ and
$\pi: R \to E$ be the (Albanese)  morphism sending $x+y$ to $x\+ y$.
We denote the fiber of $\pi$ over a point $e \in E$ by 
\[ \f_e:=\pi^{-1}(e)=\{ x+y \in \Sym^2(E) \; | \; x\+ y=e \; \mbox{(equivalently,} \; 
x + y \sim e+e_0)\},\]
(where $\sim$ denotes linear equivalence), which is the $\PP^1$ defined by the linear system $|e+e_0|$. We denote the algebraic equivalence class of the fibers by $\f$. 

For each $e \in E$ we define the curve $\s_e$ (called $D_e$ in \cite{CaCi})
as the image of the section $E \to R$ of the Albanese morphism mapping $x$ to $e+ (x \ominus e)$.
We let $\s$ denote the algebraic equivalence class of these sections, which are the ones with minimal self-intersection, namely $1$, cf. \cite{CaCi}.  One has
$$
K_R \sim -2\s_{e_0}+\f_{e_0}.
$$

Let $y_1,\ldots,y_n \in R$ be distinct points and let $\widetilde{R}:=\Bl_{y_1,\ldots,y_n}(R) \to R$ denote the blow--up of $R$ at $y_1,\ldots,y_n$, with exceptional divisors $\e_i$ over $y_i$.
We denote the strict transforms of $\s$ and $\f$ on $\widetilde{R}$ by the same symbols.

 \begin{defn} \label{def:good}
  A line bundle or Cartier divisor $L$ on $\widetilde{R}$ is said to verify condition $(\star)$ if it is of the form $L\equiv \alpha\s+\beta \f-\sum_{i=1}^n \gamma_i \e_{i}$ (where $\equiv$ denotes numerical or, equivalently, algebraic equivalence), with $\alpha$, $\beta$, $\gamma_1,\ldots, \gamma_n$ are integers such that:\\
  \begin{inparaenum}
\item [(i)] $\alpha\geq 1$, $\beta \geq 0$;\\
\item [(ii)] $\alpha\geq \gamma_i$ for $i=1,\ldots, n$;\\
\item [(iii)] $\alpha+\beta\geq \sum_{i=1}^n\gamma_i$;\\ 
\item [(iv)] $\alpha+2\beta\geq \sum_{i=1}^n\gamma_i+4$.
\end{inparaenum}
\end{defn}

Condition (ii) is satisfied if $L$ is nef. Condition (iv) is equivalent to $-L\cdot K_{\widetilde R}\geq 4$.

 The statement about smoothness and dimension in Theorem \ref{thm:R11} follows from the following more general result, well--known to experts:

\begin{prop} \label{prop:sed}
  Let $S$ be a smooth projective complex surface and $\xi \in {\rm Num}(S)$
  such that $-\xi \cdot K_S>0$.
  Let $\delta$ be an integer  satisfying $0 \leq \delta \leq p_a(\xi)$.

  If $V^\xi_{\delta}(S)$ is nonempty, it is smooth and every component has the expected dimension $-\xi \cdot K_S+p_a(\xi)-\delta-1$.
\end{prop}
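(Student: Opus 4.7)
The plan is to choose any point $[C] \in V^\xi_\delta(S)$ and analyze the equisingular deformation theory of the nodal curve $C$ at that point. By general deformation theory of nodal curves on a smooth surface, the Zariski tangent space to $V^\xi_\delta(S)$ at $[C]$ is $H^0(C, N'_{C/S})$, where $N'_{C/S}$ is the equisingular normal sheaf; obstructions to deforming $C$ as a $\delta$-nodal curve live in $H^1(C, N'_{C/S})$, and one always has $\dim_{[C]} V^\xi_\delta(S) \geq \chi(N'_{C/S})$, with equality and smoothness at $[C]$ as soon as $H^1(C, N'_{C/S})$ vanishes. The whole proof therefore reduces to computing $\chi(N'_{C/S})$ and establishing this $H^1$-vanishing.

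To carry this out I would pull everything back to the normalization $\nu \colon \widetilde{C} \to C$, a smooth irreducible curve of genus $g = p_a(\xi) - \delta$. Writing $D \subset \widetilde{C}$ for the reduced divisor of the $2\delta$ preimages of the $\delta$ nodes and setting $\L := \nu^*\O_S(C) \* \O_{\widetilde{C}}(-D)$, the standard identification $N'_{C/S} \cong \nu_*\L$, together with the finiteness of $\nu$, gives $H^i(C, N'_{C/S}) \cong H^i(\widetilde{C}, \L)$ for all $i$. Riemann--Roch on $\widetilde{C}$, using $\deg \L = \xi^2 - 2\delta$ and the adjunction relation $\xi^2 = 2p_a(\xi) - 2 - \xi \cdot K_S$, then yields
$$\chi(\L) = \xi^2 - 2\delta + 1 - g = -\,\xi \cdot K_S + p_a(\xi) - \delta - 1,$$
which is exactly the expected dimension claimed.

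The only step where a hypothesis is genuinely used, and the main point of the argument, is the vanishing $H^1(\widetilde{C}, \L) = 0$. By Serre duality it equals $h^0(\widetilde{C}, K_{\widetilde{C}} \* \L\v)$, and a short degree count gives $\deg(K_{\widetilde{C}} \* \L\v) = (2g-2) - (\xi^2 - 2\delta) = \xi \cdot K_S$, which is strictly negative by the assumption $-\xi \cdot K_S > 0$. Hence the line bundle $K_{\widetilde{C}} \* \L\v$ has negative degree on the smooth irreducible curve $\widetilde{C}$, so its $H^0$ vanishes, and therefore $H^1(\widetilde{C}, \L) = H^1(C, N'_{C/S}) = 0$. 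Combined with the opening discussion, this proves both the smoothness of $V^\xi_\delta(S)$ at $[C]$ and that every component through $[C]$ has dimension $-\xi \cdot K_S + p_a(\xi) - \delta - 1$. I do not foresee any real obstacle: all the substance is packed into the identification $N'_{C/S} \cong \nu_*\L$, after which the Riemann--Roch and Serre duality computations are routine, and the negativity hypothesis on $\xi \cdot K_S$ is used exactly once to force the vanishing.
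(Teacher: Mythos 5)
Your proposal is correct and follows essentially the same route as the paper: your line bundle $\L=\nu^*\O_S(C)(-D)$ on the normalization is exactly the normal sheaf $\N_\varphi$ of the map $\widetilde C\to S$ that the paper uses (for a nodal curve these agree, and $\nu_*\L$ is the equisingular normal sheaf), and both arguments conclude by observing that this bundle has degree $-\xi\cdot K_S+2g-2>2g-2$, hence is nonspecial, giving the $H^1$-vanishing and the dimension count. The only cosmetic difference is that the paper first records the smoothness and dimension of the Hilbert scheme $V^\xi(S)$ itself before passing to the Severi variety, a step you bypass.
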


\begin{proof}
  Let $X$ be any curve in $V^\xi_{\delta}(S)$ and let $V^\xi(S)$ be the Hilbert scheme defined in the introduction. Since \[ \deg(\N_{X/S})=\xi^2=\xi \cdot(\xi+K_S)- \xi \cdot K_S=2p_a(\xi)-2-\xi \cdot K_S>2p_a(\xi)-2,\]
  the normal bundle $\N_{X/S}$ is nonspecial, whence $V^\xi(S)$ is smooth at $[X]$ of dimension $h^0(\N_{X/S})=-\xi \cdot K_S+p_a(\xi)-1$ (cf., e.g., \cite[\S 4.3]{ser}).

 Let $\varphi:\widetilde{X} \to S$ be the composition of the normalization $\widetilde{X} \to X$ with the inclusion $X \subset S$ and consider the  {\it normal sheaf}  $\N_{\varphi}$ defined by the short exact sequence
\[
  \xymatrix{
    0 \ar[r] & \T_{\widetilde{X}} \ar[r] & \varphi^*\T_S  \ar[r] & \N_{\varphi} 
    \ar[r] & 0.}\]
The tangent space to $V^\xi_{\delta}(S)$ at $[X]$ is isomorphic to $H^0(\widetilde{X},\N_{\varphi})$, and $\N_{\varphi}$ is a line bundle, as $X$ is nodal (cf., e.g., \cite[\S 3.4.3]{ser} or \cite{DS}). Let $g$ be the geometric genus of $X$.
Since $\deg \N_{\varphi} =-X \cdot K_{S}  +2g-2 >2g-2$ by the above sequence, the line bundle
$\N_{\varphi}$ is nonspecial, and
\[ h^0(\N_{\varphi})=-\xi \cdot K_S+g-1=-\xi \cdot K_S+p_a(\xi)-\delta-1=
  \dim (V^\xi(S))-\delta,\]
which is the expected dimension of $V^\xi_{\delta}(S)$. Thus, $V^\xi_{\delta}(S)$ is smooth at $[X]$ and of the expected dimension. 
\end{proof}

By what we said in the introduction, it suffices to prove Theorem \ref{thm:R11} for the highest possible $\delta$, that is, for $\delta=p_a(L)-1$, in which case the Severi variety in question parametrizes nodal curves of geometric genus one. We will prove the theorem by specializing the points $y_1,\ldots,y_n$ as we now explain.

Let $\eta$ be any of the three nonzero $2$-torsion points of $E$.  The map $E \to R$ defined by mapping $e$ to $e + (e \+ \eta)$ realizes $E$ as an unramified double cover of its image curve
\[ T:= \{ e+ (e\+\eta) \; | \; e \in E\}. \]
We have
\begin{equation} \label{eq:T} 
T \sim -K_R+\f_{\eta}-\f_{e_0} \sim 2\s_{e_0}-2\f_{e_0}+\f_{\eta},
\end{equation}
by \cite[(2.10)]{CaCi}. In particular,
\begin{equation} \label{eq:T2} 
  T \not \sim -K_R \; \; \mbox{and} \; \; 2T \sim -2K_R.
\end{equation}

We denote the strict transform of $T$ on $\widetilde{R}$ by the same symbol.
Suppose that $y_1,\ldots,y_n\in T$ are general points. Then
by \eqref{eq:T}--\eqref{eq:T2} we have
\[ 
  T \sim   2\s_{e_0}-2\f_{e_0}+\f_{\eta}-\e_1-\cdots-\e_n \not \sim -K_{\widetilde{R}}, \; \; \; 2T  \sim -2K_{\widetilde{R}}  
\]
on $\widetilde{R}$. In particular,
\[ T \equiv -K_{\widetilde{R}} \equiv 2\s-\f-\e_1-\cdots-\e_n.\]

As remarked in the introduction, 
Theorem \ref{thm:R11} is a consequence of the following result, which
we will prove in \S \ref {sec:proof}.

\begin{prop} \label{prop:R11} Let $E$ be a general irreducible smooth projective curve of genus one.
  Let $y_1,\ldots,y_n \in T$ be general points, with $T$ on $R={\rm Sym}^2(E)$ as defined above. Let $L$ be a line bundle on $\widetilde{R}=\Bl_{y_1,\ldots,y_n}(R)$ verifying condition $(\star)$ with class $\xi$ in ${\rm Num}(\widetilde{R})$.  Then $V_{p_a(L)-1}^\xi(\widetilde{R})$ is nonempty and smooth, of the expected dimension $L \cdot T=-L\cdot K_{\widetilde R}$.
\end{prop}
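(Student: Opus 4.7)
The plan is to prove Proposition \ref{prop:R11} by degeneration. The idea is to exhibit, on the central fibre of a one-parameter family specialising $\widetilde R$ to a reducible surface, a limit curve that has exactly $p_a(L)-1$ nodes and can be deformed back to a curve in $V^\xi_{p_a(L)-1}(\widetilde R)$ on the general fibre. Smoothness and the expected dimension will then follow automatically from Proposition \ref{prop:sed}, since condition (iv) of $(\star)$ gives $-L\cdot K_{\widetilde R}\geq 4>0$.

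First, I use the degeneration recalled in \S\ref{sec:deg}: a family $\mathcal R\to\Delta$ over a disc whose general fibre is $R=\Sym^2(E)$ and whose central fibre $\mathcal R_0$ is a reducible surface with rational components, together with the specialisation $T_0\subset \mathcal R_0$ of the distinguished genus-one curve $T$. I then specialise the blown-up points $y_1,\ldots,y_n$ to general points of $T_0$ and extend the family by blowing up sections through them, producing a family $\widetilde{\mathcal R}\to\Delta$ with general fibre $\widetilde R$ and central fibre a blow-up $\widetilde{\mathcal R}_0$ of $\mathcal R_0$, still containing the proper transform of $T_0$.

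Second, I construct a limit curve $C_0\subset \widetilde{\mathcal R}_0$ whose class specialises to $\xi$, with $T_0$ as its unique non-rational component (so that the geometric genus of $C_0$ equals $1$, matching that of a $(p_a(L)-1)$-nodal curve in class $L$) and with exactly $p_a(L)-1$ nodes. The residual part $C_0-T_0$ is assembled component by component from the families of nodal rational curves on blow-ups of $\PP^2$ produced in \S\ref{sec:lemmas}; inequalities (i)--(iv) of Definition \ref{def:good} are precisely what is needed to guarantee that these families exist and can be glued together with the prescribed incidence data along the double curve of $\widetilde{\mathcal R}_0$ and along $T_0$.

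Third, I smooth $C_0$ to the general fibre by a standard degeneration-and-smoothing argument: since $C_0$ is nodal, meets the double locus of $\widetilde{\mathcal R}_0$ transversally, and the relevant local obstructions vanish at the nodes on the double locus, $C_0$ deforms flatly to an irreducible $(p_a(L)-1)$-nodal curve of geometric genus one on the general fibre, in numerical class $\xi$. This gives non-emptiness of $V^\xi_{p_a(L)-1}(\widetilde R)$, and Proposition \ref{prop:sed} then delivers smoothness and the expected dimension $L\cdot T=-L\cdot K_{\widetilde R}$. The main obstacle is step two together with the smoothing verification in step three: the numerical class has to be distributed across the components of $\widetilde{\mathcal R}_0$ subject to the constraints coming from $(\star)$, and the intersections with the double curve must be arranged so that no local obstruction prevents smoothing while producing exactly $p_a(L)-1$ nodes in the limit, neither more nor fewer. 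Controlling these two aspects simultaneously is precisely what the auxiliary families of curves on blow-ups of $\PP^2$ constructed in \S\ref{sec:lemmas} are designed for.
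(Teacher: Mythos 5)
There is a genuine gap in your step two, and it concerns the heart of the argument. In the degeneration of \S\ref{sec:deg} the elliptic curve $E$ itself degenerates to a rational nodal curve $X_0$, so the central fibre is (a blow-up of) $\Sym^2(X_0)$ and every curve on it is rational or a cycle of rational components; in particular the limit of $T$ is $\sigma(h_0)$ together with rational pieces on $\E$ (on $W$ one has $T_W=h_0+e_1+e_2+s_0$). There is therefore no non-rational component ``$T_0$'' on the central fibre available to carry the geometric genus of the limit curve, and your proposed mechanism for obtaining genus one cannot be implemented in this degeneration. The paper's mechanism is entirely different: the limit curve is $\sigma'(C)\cup C_{\E}$, where $C\in V^*_{L_0}$ is an irreducible nodal \emph{rational} curve on $W'$ meeting each branch $\widetilde{X}_{P_i}$ of the double curve at a single point $x_i$ with multiplicity $\alpha$, unibranch there, with $x_2=\omega(x_1)$; after gluing, $\sigma'(C)$ acquires an $\alpha$-tacnode at $\sigma'(x_1)=\sigma'(x_2)$. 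The machinery of \cite{gk} then deforms this tacnode into $\alpha-1$ nodes (not $\alpha$) while preserving the $\delta_0=p_a(L_0)$ nodes of $\sigma'(C)$ and smoothing its intersections with $C_{\E}$; the deficit of one node in the deformation of the tacnode is exactly what raises the geometric genus from $0$ to $1$. Relatedly, your limit curve is asserted to have ``exactly $p_a(L)-1$ nodes'', whereas the correct limit has only $\delta_0<p_a(L)-1$ nodes together with the $\alpha$-tacnode.

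A second, subordinate gap is that running the argument of \cite{gk} requires two nontrivial dimension counts which your phrase ``the relevant local obstructions vanish'' does not supply: first, that the locus $\{L_0\}$ of $\omega$-compatible limit curves has the same dimension as the generic algebraic system $\{L\}$ (Claim \ref{cl:ugua}, which also forces $h^1(L)=0$); and second, that the locus of curves through the $\delta_0$ nodes with an $(\alpha-1)$-tacnode at the glued point has exactly the expected codimension (Claim \ref{cl:lisc}). You do correctly identify that \S\ref{sec:lemmas} supplies the required families of rational curves (Proposition \ref{prop:strafico}, resting on the explicit plane construction of Lemma \ref{lemma:checazzo!} and the Caporaso--Harris-type Lemma \ref{lemma:TD}), and your appeal to Proposition \ref{prop:sed} for smoothness and dimension once nonemptiness is known is right; but without the tacnode device and the two dimension counts the proof does not go through.
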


 \section{A degeneration of the twofold symmetric product of a general elliptic curve} \label{sec:deg}

Let $E$ be a smooth irreducible projective  elliptic curve. 
We recall a degeneration of $R=\Sym^2(E)$ from \cite{ck}, to which we refer the reader for  details.

Let $\X \to \DD$ be a flat projective family of curves over the unit disk $\DD$, with $\X$ smooth, such that the fiber $X_0$ over $0\in \DD$ is an irreducible rational $1$-nodal curve and all other fibers $X_t$, $t \in \DD\setminus \{0\}$, are smooth irreducible elliptic curves. Let $p:\Y \to \DD$ be the relative $2$-symmetric product. Then, for
$t \neq 0$, the fiber $Y_t=p^{-1}(t)\cong \Sym^2(X_t)$ is smooth, whereas the special fiber  $Y_0=p^{-1}(0)= \Sym^2(X_0)$ is irreducible, but   singular.  The singular locus of $Y_0$ consists of the curve
\[ X_P:=\{ x+P \; | \; x \in X_0\}, \] where $P$ is the node of $X_0$.

Let $\nu:\PP^1 \cong \widetilde{X}_0 \to X_0$ be the normalization, with $P_1$ and $P_2$ the preimages of $P$ (with the notation of \cite[p.~328]{ck}, this is the case $g=1$ with $P=P_1$).
Then $\nu$ induces a birational morphism
\begin{equation}\label{eq:sym}
 \Sym^2(\nu): \PP^2 \cong \Sym^2(\widetilde{X}_0) \longrightarrow \Sym^2(X_0)=Y_0.
 \end{equation}
Under the isomorphism on the left the diagonal in $\Sym^2(\widetilde{X}_0)$ corresponds to a smooth conic
$\Gamma$ in $\PP^2$ that is mapped by $\Sym^2(\nu)$ to the diagonal  $\Delta_0$  of $Y_0$ and, for each $x \in \widetilde{X}_0$, the curve
\[ \{x+Q \; | \; Q \in \widetilde{X}_0\} \subset \Sym^2(\widetilde{X}_0)\]
 corresponds to the line in $\PP^2$  tangent to $\Gamma$ at the point corresponding to $2x$.

The threefold $\Y$    has local equations at the point $2P\in Y_0$ given by
$$
z_5^2-z_3 z_4=0,\,\,\,\,z_1 z_5+z_2 z_3=0,\,\,\,\,
z_1 z_4+z_2 z_5=0,\,\,\,\,z_1 z_2 + z_5+t=0,
$$
with $(z_1,...,z_5,t)\in\mathbb C^5\times\DD$ (cf. \cite[p. 329]{ck}). In particular, $\Y$ is singular only at the point $2P$, corresponding to the origin
$\underline 0$. 
Its special fibre $Y_0$ is locally reducible at
$\underline 0=2P$, where it consists of three irreducible components $S^1\cup S^2\cup S^3$ (named $S^i_1$ in \cite{ck}), where $S^1$ is the $z_2z_4$-plane,
$S^2$ is the $z_1z_3$-plane and $S^3$ has equations $z_3=z_1^2,\,z_4=z_2^2, z_5=-z_1z_2$,  meeting as in \cite[Fig. 1]{ck}. 
The singular locus $X_P=\Sing(Y_0)$ of $Y_0$ has a node at the origin, $Y_0$ has double normal crossing singularities
along $X_P\setminus 2P$ and the intersection curves $C^1=S^3\cap S^1$ and $C^2=S^3\cap S^2$ (named $C^i_1$ in \cite{ck}) are the two branches of the curve $X_P$ at $\underline 0=2P$.  Finally, in these local coordinates, the diagonal $\Delta_0$ of $Y_0$ ($\Delta_0=\Delta_{0,1}^1\cup\Delta_{0,2}^1$ in \cite[Fig. 1]{ck}) consists of the $z_2,z_1$-axes and it has a node at the point $2P$. 
 
Let $\mu:\widetilde{\Y} \to \Y$ be the blow--up at the point $2P \in \Sym^2(X_0)=Y_0 \subset \Y$ and denote the exceptional divisor by $\E$ (called $E_1$ in  \cite{ck}). Then $\E \cong \FF_1$ and $\widetilde{\Y}$ is smooth (see \cite[p. 330]{ck}). All fibers over $t \neq 0$ are unchanged.
The special fiber $\widetilde{Y}_0$ of $\widetilde{\Y} \to \DD$ is the union of
$\E$ and of an irreducible surface $\widetilde{S}$, which is the strict transform of $Y_0$. We have
\[ \widetilde{S} \cap \E= s_0+e_1+e_2,\]
where $e_1$ and $e_2$ are two fibers of $\E \cong \FF_1$ and $s_0$ (called $\eta_1$ in \cite[Fig. 2]{ck}) is the section satisfying $s_0^2=-1$. The surface  $\widetilde{S}$ is singular, with  double  normal crossings singularities  along the proper transform
$\widetilde{X}_P$ of the curve $X_P$. The proper transform on $\widetilde{\Y}$ of the diagonal of $\Y$ intersects $\widetilde{Y}_0$ along
\[ \widetilde{\Delta}_0+s_0,\]
where  $\widetilde{\Delta}_0$ is the proper transform of the diagonal
$\Delta_0$ on $Y_0$.

To normalize $\widetilde{S}$ one unfolds along $\widetilde{X}_P$. The resulting surface $W$ is smooth. Denote the normalization map by $\sigma: W \to \widetilde{S}$. The preimage of $\widetilde{X}_P$ is a pair of curves, which we call
$\widetilde{X}_{P_1}$ and $\widetilde{X}_{P_2}$. Denoting the inverse images on $W$ of the curves $e_1,e_2,s_0$ on $\widetilde{S}$ by the same symbols, the intersection configuration between the curves $e_1,e_2,s_0,\widetilde{X}_{P_1},\widetilde{X}_{P_2}$ on $W$ looks as follows:

\begin{center}
\begin{tikzpicture}[scale=0.4]

\draw[thick] (-6,-1.3)  -- (6,-1.3) -- (6,8.5) -- node[above] {$W$}(-6,8.5) -- (-6,-1.3);
\draw[thick] (-2,-0)  -- (2,0) node[below right] {\small $s_0$};
\draw[thick] (4,2)  -- (4,7) node[above right] {\small $e_2$};
\draw[thick] (-4,2)  -- (-4,7) node[above left] {\small $e_1$};
\draw[red,thick] (1,-0.5)   to[out=75,in=195]  node[below right] {\small $\widetilde{X}_{P_2}$}(4.5,3);
\draw[red,thick] (-1,-0.5)   to[out=105,in=-15]  node[below left] {\small $\widetilde{X}_{P_1}$}(-4.5,3);

\end{tikzpicture}
\end{center}

Under the map $\sigma$ the two curves $\widetilde{X}_{P_1}$ and $\widetilde{X}_{P_2}$ are identified: we  denote the identification morphism by $\omega:\widetilde{X}_{P_1} \cong \widetilde{X}_{P_2}$. Under this morphism, the intersection points of the above configuration are mapped as follows:

\begin{center}
\begin{tikzpicture}[scale=0.4]

\draw[thick] (-6,-1.3)  -- (6,-1.3) -- (6,8.5) -- node[above] {$W$}(-6,8.5) -- (-6,-1.3);
\draw[thick] (-2,0)  -- (2,0) node[below right] {\small $s_0$};
\draw[thick] (4,2)  -- (4,7) node[above right] {\small $e_2$};
\draw[thick] (-4,2)  -- (-4,7) node[above left] {\small $e_1$};
\draw[red,thick] (1,-0.5)   to[out=75,in=195]  node[below right] {\small $\widetilde{X}_{P_2}$}(4.5,3);
\draw[red,thick] (-1,-0.5)   to[out=105,in=-15]  node[below left] {\small $\widetilde{X}_{P_1}$}(-4.5,3);

\filldraw[green] (4,2.85) circle (0.1);
\filldraw[green] (-1.15,0) circle (0.1);
\draw[green, ultra thick, <->] (-1,0.2)   to[out=65,in=180]  (3.8,3);

\filldraw[green] (-4,2.85) circle (0.1);
\filldraw[green] (1.15,0) circle (0.1);
\draw[green, ultra thick, <->] (1,0.2)   to[out=115,in=0]  (-3.8,3);

\end{tikzpicture}
\end{center}

\begin{defn} \label{def:compa}
  We say that a curve $C \subset W$ is {\it $\omega$-compatible} if $C$ contains 
  neither $\widetilde{X}_{P_1}$ nor $\widetilde{X}_{P_2}$  and 
   $\omega$ maps the $0$--dimensional  intersection scheme of $C$ with $\widetilde{X}_{P_1}$ to the intersection scheme of $C$ with $\widetilde{X}_{P_2}$. 
\end{defn}

If the curve $C$ is $\omega$--compatible then $\sigma(C)$ is a Cartier divisor on $\widetilde{S}$. Conversely any curve on $\widetilde S$ that is a Cartier divisor and does not contain the singular curve of $\widetilde S$  is the image by $\sigma$ of  an $\omega$--compatible curve on $W$.

One sees that the curves $s_0,e_1,e_2$ are $(-1)$--curves on $W$ (see \cite[p. 331--332]{ck}). 
Contracting them we obtain  a morphism $\phi:W \to \PP^2\cong {\rm Sym}^2(\widetilde X_0)$ such that
\[ \phi(s_0)=P_1+P_2, \; \; \phi(e_1)=2P_1, \; \; \phi(e_2)=2P_2\]
and
\[ \phi(\widetilde{X}_{P_i})=X_{P_i}:=\{P_i+Q \; | \; Q \in \widetilde{X}_0\},\]
fitting in a commutative diagram
\[ \xymatrix{
    W \ar[r]^{\sigma} \ar[d]_{\phi} & \widetilde{S}  \ar@{}[r]|-*[@]{\subset} & \hspace{-0.2cm}  \widetilde{S} \cup \E  = \hspace{-0.9cm}
    &   \widetilde{Y}_0  \ar[d]^{\mu|_{\widetilde{Y}_0}} \ar@{}[r]|-*[@]{\subset}  & \ar[d]^{\mu} \widetilde{\Y}  \\
    \PP^2\ar[rrr]_{\Sym^2(\nu)} & & & Y_0 \ar[d]^{p|_{Y_0}}   \ar@{}[r]|-*[@]{\subset}   & \Y \ar[d]^{p} \\
& & &  \{0\}  \ar@{}[r]|-*[@]{\subset}   & \DD 
  }
\] 
(see \cite [p. 332]{ck}). This is shown in the next picture:

\begin{center}
\begin{tikzpicture}[scale=0.4]

\draw[thick] (-6,-1.3)  -- (6,-1.3) -- (6,8.5) -- node[above] {$W$}(-6,8.5) -- (-6,-1.3);
\draw[thick] (-2,-0)  -- (2,0) node[below right] {\small $s_0$};
\draw[thick] (4,2)  -- (4,7) node[above right] {\small $e_2$};
\draw[thick] (-4,2)  -- (-4,7) node[above left] {\small $e_1$};
\draw[red,thick] (1,-0.5)   to[out=75,in=195]  node[below right] {\small $\widetilde{X}_{P_2}$}(4.5,3);
\draw[red,thick] (-1,-0.5)   to[out=105,in=-15]  node[below left] {\small $\widetilde{X}_{P_1}$}(-4.5,3);

\draw[thick,->] (8,3) -- node[above]{$\phi$}(10,3);

\draw[thick] (12,-1.3)  -- (24,-1.3) -- (24,8.5) -- node[above]
{$\mathbb{P}^2$}(12,8.5) -- (12,-1.3);
\draw[red,thick] (17,-.5)  -- (22.5,4);
\draw[red,thick] (19,-.5) -- (13.5,4);

\filldraw (18,.3) circle (0.1) node [right] {\small $P_1+P_2$};
\filldraw (21.9,3.5) circle (0.1) node [right] {\small $2P_2$};
\filldraw (14.1,3.5) circle (0.1) node [left] {\small $2P_1$};
\end{tikzpicture}

\captionof{figure}{}
\label{fig:WP}
\end{center}

\begin{remark}\label{rem:conic} The morphism $\omega: \widetilde{X}_{P_1} \to \widetilde{X}_{P_2}$ is geometrically interpreted in the following way (see \cite {ck}). Via $\phi$ the curves $\widetilde{X}_{P_1}$ and $\widetilde{X}_{P_2}$ map  isomorphically  to the two lines on the plane $\PP^2$ in red in Figure~\ref{fig:WP} joining the point $P_1+P_2$ with the points $2P_1$ and $2P_2$ respectively. In $\PP^2$ we have the conic $\Gamma$  (mapped by $\Sym^2(\nu)$ to the diagonal $\Delta_0$ of $Y_0$), which   is tangent to these lines at the points $2P_1$ and $2P_2$. The map $\omega$ associates two points if and only if their images in the plane lie on a tangent line to $\Gamma$. 
  (The two points $P_1+Q$ and $P_2+Q$ of $\mathbb P^2$  lie on the tangent line to $\Gamma$ at the point $2Q$ and are the intersection points of this tangent line with the two lines joining $2P_1$ with $P_1+P_2$ and $2P_2$ with $P_1+P_2$,
  namely $\phi(\widetilde{X}_{P_1})$ and $\phi(\widetilde{X}_{P_2})$.) 
\end{remark}

The Picard group of $W$ is generated by $h,e_1,e_2,s_0$, where $h$ is the pullback by $\phi$ of a line. In particular,
\begin{equation} \label{eq:XP}
  \widetilde{X}_{P_i} \sim h-s_0-e_i, \; \; i=1,2.
  \end{equation}
One has
\[ -K_{W}=3h-e_1-e_2-s_0.\]

Let us look at what happens to the classes of  $\s$ and  $\f$ under the degeneration of $R$ to $\widetilde{Y}_0$. This is described in \cite [\S 2]{ck} together with the more general description of the degeneration of line bundles on $R$ under the degeneration of $R$ to $\widetilde{Y}_0$, which we are now going to recall.  

Let $h'$ be an $\omega$-compatible member of $|h|$ on $W$ (cf. Definition \ref{def:compa}), not containing any of $e_1$, $e_2$ or $s_0$. There is a one-dimensional irreducible  family of such curves whose general member is the strict transform on $W$ of a general tangent line to the conic $\Gamma$ of $\PP^2$ mapped to the diagonal of $Y_0$ by $\Sym^2(\nu)$  (cf. Remark \ref{rem:conic}).  Since $h \cdot e_1=h \cdot e_2=h \cdot s_0=0$, we have $\sigma(h') \cap \E=\emptyset$, so that $\sigma(h')\subset \widetilde S$ determines a Cartier divisor on $\widetilde{Y}_0$. The class of $\s$ on $R$ degenerates to the class of $\sigma(h')$.

The class $h-s_0$ on $W$ satisfies $(h-s_0) \cdot s_0=1$ and $(h-s_0) \cdot e_i=(h-s_0) \cdot \widetilde{X}_{P_i}=0$, $i=1,2$. Thus the general member $F$ of the pencil $|h-s_0|$ is $\omega$-compatible and $\sigma(F)$ intersects $\E$ in one point along $s_0$. Therefore, the union of $\sigma(F)$
with the fiber of $\E$ over the intersection point on $s_0$ is a Cartier divisor on $\widetilde{Y}_0$, which turns out to be the limit of $\f$.

Let $C \equiv a\s+b\f$ on $R$, with $a,b \geq 0$, and let $C_0$ be its limit on
$\widetilde{Y}_0$. Assume that it neither contains  any of $e_1, e_2, s_0$ nor the double curve of $\widetilde S$. We may write $C_0=C_{\widetilde{S}} \cup C_{\E}$ with $C_{\widetilde{S}} \subset \widetilde{S}$ and
$C_{\E} \subset \E$. Then $C_{\widetilde{S}} \cap C_{\E} \subset s_0$ and $C_{\E}$ is a union of fibers of $\E$. We have
$C_{\widetilde{S}}=\sigma(C_W)$, with $C_W$ a $\omega$-compatible curve satisfying
\[ C_W \sim ah+b(h-s_0) =(a+b)h-bs_0.\]
This is because the transform of the limit of $\s$ is numerically equivalent to $h$ on $W$ and the transform of the limit of $\f$ is equivalent to $(h-s_0)$.
This means that $\phi(C_W) \subset \PP^2$ is a plane curve of degree $a+b$ with a point of multiplicity $b$ at $P_1+P_2$, with intersection points with $X_{P_1}$ and $X_{P_2}$ satisfying the suitable ``gluing conditions'' given by $\omega$. 

Conversely, we have: 

\begin{lemma} \label{lemma:w1}
  Let $a,b \geq 0$ and $C_W \in |(a+b)h-bs_0|$ be  an $\omega$-compatible curve not containing any of $e_1,e_2,s_0$ and intersecting $s_0$ in distinct points. Let $C_{\E}$ denote the union of fibers on $\E \cong \FF_1$ such that $C_{\E} \cap s_0 =C_W \cap s_0$. 
Then $\sigma(C_W) \cup C_{\E}$  is the flat limit of a curve algebraically equivalent to $a\s+b\f$. 
\end{lemma}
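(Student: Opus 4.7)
The plan is to produce a flat family $\mathcal C \subset \widetilde{\Y}$ over $\DD$ whose central fiber is $D_0 := \sigma(C_W) \cup C_\E$ and whose general fiber is a curve of numerical class $a\s + b\f$ on $Y_t \cong \Sym^2(E_t)$. I would achieve this by extending an appropriate line bundle from $\widetilde Y_0$ to $\widetilde{\Y}$ and lifting the section cutting out $D_0$.

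First I would verify that $D_0$ is a Cartier divisor on $\widetilde Y_0 = \widetilde S \cup \E$. Since $C_W$ is $\omega$-compatible and contains none of $e_1, e_2, s_0$, the pushforward $\sigma(C_W)$ is a Cartier divisor on $\widetilde S$ by the remark following Definition~\ref{def:compa}. Because $C_W \cdot e_i = 0$ and $C_W \not\supset e_i$, the intersection $\sigma(C_W) \cap \E$ is supported on $s_0 \subset \widetilde S \cap \E$ and consists of the $b = C_W \cdot s_0$ distinct points of $C_W \cap s_0$ (distinctness holds by hypothesis). The union $C_\E$ of the $b$ fibers of $\E$ through these points then glues to $\sigma(C_W)$ along $s_0$ to produce a Cartier divisor on the reducible scheme $\widetilde Y_0$.

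Next I would find a line bundle $\mathcal L$ on $\widetilde{\Y}$ such that $\mathcal L|_{Y_t}$ has numerical class $a\s + b\f$ for $t \neq 0$ and $\mathcal L|_{\widetilde Y_0}$ agrees with $\O_{\widetilde Y_0}(D_0)$ up to a twist by components of $\widetilde Y_0$ (which are numerically trivial on neighbouring smooth fibers). This rests on smoothness of $\operatorname{Pic}(\widetilde{\Y}/\DD)$ over $\DD$: the obstruction to deforming a line bundle off $\widetilde Y_0$ lies in $H^2(Y_0, \O)$, which vanishes since $R = \Sym^2(E)$ is ruled over an elliptic curve. The identification of the numerical class on the general fiber with $a\s + b\f$ is the additive consequence of the degeneration recipes for $\s$ and $\f$ recalled just before the statement, combined with the decomposition $C_W \sim a h + b(h - s_0)$ on $W$.

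Finally I would lift the section of $\O_{\widetilde Y_0}(D_0)$ cutting out $D_0$ to a section of $\mathcal L$ on $\widetilde{\Y}$, possibly after shrinking $\DD$. This reduces to showing that the restriction map $H^0(\widetilde{\Y}, \mathcal L) \to H^0(\widetilde Y_0, \O(D_0))$ is surjective near $0$, which follows from upper semicontinuity of cohomology once $h^1$ is constant in a neighbourhood; this can be checked component by component on $\widetilde S$ and $\E$ using the numerical form of $D_0$ on each piece. The zero locus of the resulting extended section is a flat family $\mathcal C \to \DD$ with general fiber a curve in class $a\s + b\f$ on $Y_t$ and central fiber $D_0$, which proves the lemma.

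The main obstacle I expect is the line-bundle extension step together with the attendant cohomological control on the reducible fiber $\widetilde Y_0$—in particular, correctly accounting for the twist by the components $\widetilde S$ and $\E$ so that the nearby class is exactly $a\s + b\f$ and no spurious components appear in the flat limit. One should also check that a general section lifts to one not containing any fixed component, ensuring the general member of $\mathcal C$ is indeed in the desired linear class rather than a twist of it.
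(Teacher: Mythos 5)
Your strategy --- extend $\O_{\widetilde Y_0}(D_0)$ to a line bundle $\mathcal L$ on $\widetilde{\Y}$ and lift the section cutting out $D_0$ --- is genuinely different from the paper's. The paper argues in the opposite direction: the discussion preceding the lemma shows that the flat limit of any curve in the class $a\s+b\f$ is a configuration $\sigma(C_W)\cup C_{\E}$ with $C_W$ an $\omega$-compatible member of $|(a+b)h-bs_0|$, and the proof then consists of a single dimension count, namely that $\omega$-compatibility imposes exactly $a=C_W\cdot \widetilde{X}_{P_1}$ conditions, so the locus of such configurations has dimension $\frac12(a^2+a+2ab+2b)$, equal to that of the Hilbert scheme of curves algebraically equivalent to $a\s+b\f$ on the general fibre; equality of dimensions forces the limit locus to fill up the target. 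Your direct construction could in principle be carried out, but as written it has two genuine gaps.

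First, you justify unobstructedness of the line bundle by the vanishing of $H^2(Y_0,\O)$ ``since $R$ is ruled over an elliptic curve''. That is a statement about the \emph{general} fibre; $h^2(\O)$ is upper semicontinuous, so its vanishing on $Y_t$ gives no information about the reducible, non-normal central fibre $\widetilde Y_0=\widetilde S\cup\E$. One must compute $h^2(\O_{\widetilde Y_0})$ directly (it does vanish, via the normalization sequence for $\sigma\colon W\to\widetilde S$ and a Mayer--Vietoris sequence for $\widetilde S\cup\E$, using that $W$ and $\E$ are rational), and this computation is absent. Second, and more seriously, the surjectivity of $H^0(\widetilde{\Y},\mathcal L)\to H^0(\widetilde Y_0,\O_{\widetilde Y_0}(D_0))$ is not a routine semicontinuity statement: by cohomology and base change it requires $h^0(\widetilde Y_0,\O_{\widetilde Y_0}(D_0))$ to equal $h^0(Y_t,\mathcal L_t)=\frac12(a^2+a+2ab+2b)$, i.e.\ that $h^0$ does not jump on the central fibre. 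Unwinding what $h^0(\widetilde Y_0,\O_{\widetilde Y_0}(D_0))$ is --- sections on $\widetilde S$ correspond to $\omega$-compatible divisors on $W$ with fixed descent datum, matched with sections on $\E$ along $s_0+e_1+e_2$ --- one finds that this equality is precisely the statement that $\omega$-compatibility cuts out codimension exactly $a$ in $|(a+b)h-bs_0|$. In other words, the step you defer to a ``component by component'' check is the entire mathematical content of the lemma and coincides with the dimension count that constitutes the paper's proof; without it your argument establishes nothing, and with it your extra machinery (line bundle extension, base change) becomes a longer route to the same destination.
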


\begin{proof}
Since   
$C_W \cdot \widetilde{X}_{P_i}=a$, the locus of $\omega$-compatible curves in $|C_W|$ has dimension 
\[ \dim |C_W|-a=\frac{1}{2}\left(a^2+3a+2ab+2b\right)-a=\frac{1}{2}\left(a^2+a+2ab+2b\right),\]
which equals the dimension of the Hilbert scheme of curves algebraically equivalent to $a\s+b\f$. The result follows from the discussion prior to the lemma.
\end{proof}

Let us now go back to the degeneration $\X \to \DD$ of  a general elliptic curve $E$ to a rational nodal curve $X_0$ we considered at the beginning of this section. This can be viewed as a degeneration of the group $E$ to $\mathbb C^*$, where (keeping the notation introduced at the beginning of this section) $\mathbb C^*=\PP^1\setminus \{P_1,P_2\}$. Since $\mathbb C^*$ has a unique non--trivial point of order 2, i.e., $-1$, we see that in the degeneration $\X \to \DD$ one of the three non--trivial points of order 2 of the general fibre degenerates to $-1$, so it is fixed by the monodromy of the family $\X \to \DD$. (The other two non--trivial points of order 2 must degenerate to the node of $X_0$.) 
This implies that  we have a divisor $\T$ on $\widetilde{\Y}$ such that the fiber $T_t$ for $t \neq 0$ is a curve $T \subset R$ like the one we considered in \S \ref {sec:prel}. Since $T\equiv 2\mathfrak s-\f$, the proper transform $T_W$ on $W$ of the limit of the curve $T$ is such that 
$$T_W \sim 2h-(h-s_0)\sim h+s_0$$
(remember that the pull--back on $W$ of the limit of $\s$ and $\f$ are $h$ and $h-s_0$ respectively). More precisely, since $T$ has zero intersection with the diagonal of $R$, the image of $T_W$  in $\PP^2$ via $\phi: W\to \PP^2$ must intersect the conic $\Gamma$,  which is mapped by $\Sym^2(\nu)$ to the diagonal of $Y_0$ (see \eqref {eq:sym}), only in points of $\Gamma$ that are blown up by $\phi$, i.e., in the points corresponding to $2P_1$ and $2P_2$ (see Figure~\ref{fig:WP}). This implies that 
$$
  T_W=h_{0}+e_1+e_2+s_0,
$$
where $h_0$ is the strict transform by $\phi$ of the line in $\PP^2$ through $2P_1$ and $2P_2$. 

Since $T_W$ contains $s_0$, the divisor $\T$ contains $\mathcal E$. By subtracting $\mathcal E$ from $\T$, the resulting irreducible effective divisor $\T-\mathcal E$ intersects the central fibre in a curve that consists of two components: one component on $\widetilde S$,  which is $\sigma(h_0)$, 
and another component sitting on $\mathcal E$ that is the pull--back on $\mathcal E\cong \mathbb F_1$ of the unique line of $\PP^2$ passing through the two points in which $h_0$ intersects $e_1$ and $e_2$. However, what will be important for us in what follows is that $\sigma(h_0)$    is in the limit of $T$.

\section{A useful family of rational curves on some blow--ups of the plane}\label{sec:lemmas}

In this section we prove some results on certain line bundles on some blow--ups of the surface $W$ introduced in the previous section. They will be useful in the proof of Proposition \ref {prop:R11} in \S \ref{sec:proof}. We go on keeping the notation and convention we introduced in the previous section.

Let $y_1,\ldots,y_n \in h_0$ be general points. Choose sections of  $p:\widetilde{\Y} \to \DD$ passing through $\sigma(y_1),\ldots,\sigma(y_n) \in \sigma(h_0)$  and through general points $y^t_1,\ldots,y^t_n \in T_t$ on a general fiber $Y_t$. 
Blowing up $\widetilde\Y$ along these sections, we obtain a smooth threefold $\Y'$ with a morphism  $p':\Y' \to \DD$ with general fiber the blow--up of $Y_t=\Sym^2(X_t)$ at $n$ general points of $T_t$ and 
special fiber $Y':=S' \cup \E$, where $S'=\Bl_{\sigma(y_1),\ldots,\sigma(y_n)} (\widetilde{S})$, and there is a normalization morphism $\sigma':W' \to S'$, where $W'=\Bl_{y_1,\ldots,y_n}(W)$. Let $e_{y_i}$ denote the  exceptional divisor in $W'$ over $y_i$, for $i=1,\ldots, n$.
We denote the strict transforms of $e_1,e_2,s_0,\widetilde{X}_{P_1},\widetilde{X}_{P_2},h_0$ on $W'$ by the same symbols. Note that \eqref{eq:XP} still holds; furthermore 
$$
 h_0 \sim h-e_1-e_2-e_{y_1}-\cdots-e_{y_n}
$$
and
 \begin{equation}
   \label{eq:kw}
 -K_{W'}=3h-e_1-e_2-s_0-e_{y_1}-\cdots-e_{y_n} \sim h_0+e_1+e_2+\widetilde{X}_{P_1}+\widetilde{X}_{P_2}+s_0.
 \end{equation}
Moreover, the pull--back on $W'$ of the limit of $T$ on $S'$ contains $h_0$. 

We next fix a general point $x_1 \in \widetilde{X}_{P_1}$ and set $x_2=\omega(x_2) \in  \widetilde{X}_{P_2}$. The following picture summarizes the situation:

\begin{center}
\begin{tikzpicture}[scale=0.6]

\draw[thick] (-7.5,-1)  -- (7.5,-1) -- (7.5,8) -- node[above] {$W'$}(-7.5,8) -- (-7.5,-1);
\draw[thick] (-2,-0)  -- (2,0) node[below right] {\small $s_0$};
\draw[thick] (4.3,2)  -- (4.3,7) node[above right] {\small $e_2$};
\draw[thick] (-4.3,2)  -- (-4.3,7) node[above left] {\small $e_1$};
\draw[red,thick] (1,-0.5)   to[out=75,in=195]  node[below right] {\small $\widetilde{X}_{P_2}\sim h-s_0-e_2$}(5,3);
\draw[red,thick] (-1,-0.5)   to[out=105,in=-15]  node[below left] {\small $\widetilde{X}_{P_1} \sim h-s_0-e_1$}(-5,3);

\draw[blue,thick] (-5,5)   -- node[below=0.2cm] {\small $h \sim h-e_1-e_2-e_{y_1}-\cdots -e_{y_n}$}(5,5);

\draw[thick] (-3,4.8)  -- (-3,6.8) node[above] {\small $e_{y_1}$};
\draw[thick] (-1.8,4.8)  -- (-1.8,6.8) node[above] {\small $e_{y_2}$};
\draw[thick] (-0.6,4.8)  -- (-0.6,6.8) node[above] {\small $e_{y_3}$};
\draw[dotted] (-0.3,5.8)  -- (1.7,5.8);
\draw[thick] (2,4.8)  -- (2,6.8) node[above] {\small $e_{y_n}$};

\filldraw[black!30!green] (2.7,2) circle (0.1) node [left] {\small $x_2=\omega(x_1)$};
\filldraw[black!30!green] (-2.7,2) circle (0.1) node [right] {\small $x_1$};

\end{tikzpicture}
\end{center}

\renewcommand{\M}M
We introduce the following notation. 
 For a line bundle $\M$ on $W'$, we denote by $V_{\M}$ the locus of curves $C$ in $|\M|$ on $W'$ such that
\begin{itemize}
  \item $C$ is irreducible and rational,
  \item $C$ intersects $\widetilde{X}_{P_i}$ only at $x_i$, $i=1,2$, and it is unibranch there.
  \end{itemize}
We denote by $V_{\M}^*$ the open sublocus of $V_{\M}$ of curves $C$ with the further property that
\begin{itemize}
  \item $C$ intersects $s_0$ transversely,
  \item $C$ is smooth at $x_i$, $i=1,2$,
  \item $C$ is nodal.
  \end{itemize}

  \begin{lemma}\label{lemma:TD}
    Assume $V_{\M} \neq \emptyset$.

    (i) If $\M \cdot(h+s_0-e_{y_1}-\cdots-e_{y_n}) \geq 1$, then for each component $V$ of $V_{\M}$ one has 
\[
    \dim (V)=-K_{W'}\cdot \M-1-\M \cdot \widetilde{X}_{P_1}-\M \cdot \widetilde{X}_{P_2}=
    \M \cdot(h+s_0-e_{y_1}-\cdots-e_{y_n})-1. 
 \]
   
(ii) If  $\M \cdot(h+s_0-e_{y_1}-\cdots-e_{y_n}) \geq 4$, then 
    $V_{\M}^*\neq \emptyset$. 
  \end{lemma}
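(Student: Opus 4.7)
The plan for both parts is to apply standard deformation theory to the normalization $\varphi\colon \widetilde{C}\cong \mathbb{P}^1 \to W'$ of a curve $C\in V_{\M}$.

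\textbf{Part (i).} Fix $[C]\in V$ and let $\tilde x_i\in \widetilde{C}$ be the unique preimage of $x_i$, which exists as $C$ is unibranch at $x_i$. The unibranch condition forces $\varphi^{*}\widetilde{X}_{P_i}=m_i\tilde x_i$ as Cartier divisors on $\widetilde{C}$, with $m_i:=\M\cdot \widetilde{X}_{P_i}$. First-order deformations of $\varphi$ preserving the contact of order $m_i$ with $\widetilde{X}_{P_i}$ at the fixed point $\tilde x_i$ are controlled by a sheaf $\N$ on $\widetilde{C}$ obtained from the normal sheaf $N_\varphi$ by twisting down by $m_i \tilde x_i$ at each $\tilde x_i$ (modulo possible torsion contributions from non-immersion points, which do not affect the degree count). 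Since $\widetilde{C}\cong\mathbb{P}^1$ and $\deg N_\varphi = -\M\cdot K_{W'}-2$, one computes
\[
  \deg\N \;=\; -\M\cdot K_{W'}-2-m_1-m_2 \;=\; \M\cdot(h+s_0-e_{y_1}-\cdots-e_{y_n})-2,
\]
using the identity $-K_{W'}-\widetilde{X}_{P_1}-\widetilde{X}_{P_2} = h+s_0-\sum_j e_{y_j}$, immediate from \eqref{eq:XP} and \eqref{eq:kw}. The hypothesis is precisely $\deg\N\geq -1$, giving $H^1(\widetilde{C},\N)=0$ and $h^0(\N)=\M\cdot(h+s_0-\sum_j e_{y_j})-1$. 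Since $h^0(\N)$ is the dimension of the Zariski tangent space to $V$ at $[C]$ and (via $h^1=0$) matches the deformation-theoretic lower bound $\dim V\geq h^0-h^1$, every component of $V$ has the claimed dimension.

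\textbf{Part (ii).} To produce an element of $V_\M^*$, I would argue in two steps. First, show $V_\M\neq\emptyset$ via a degeneration-and-smoothing argument: decompose $\M$ as a sum $\M_1+\cdots+\M_k$ of classes with strictly smaller values of $\M_i\cdot(h+s_0-\sum_j e_{y_j})$, select rational curves $C_i\in V_{\M_i}$ (obtained inductively, with base cases built by pushing down via $\phi\colon W'\to \mathbb{P}^2$ to explicit rational plane curves with prescribed singularities and tangencies to the lines $X_{P_i}$) that glue compatibly at $x_1$ and $x_2$, and partially smooth the reducible union $C_0:=\bigcup_i C_i$ while keeping the geometric genus zero, yielding an irreducible element of $V_\M$. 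Second, note that the three extra conditions defining $V_\M^*$ inside $V_\M$ (nodality, smoothness at each $x_i$, transversality to $s_0$) are open and their failure loci are proper closed subsets; since part (i) gives $\dim V_\M\geq 3$ under the hypothesis $\M\cdot(h+s_0-\sum_j e_{y_j})\geq 4$, there is enough room for all three to hold generically on each component. The main obstacle is the first step: concretely constructing $C_0$ and verifying that the partial smoothing produces an irreducible curve still in $V_\M$ (rather than landing in a deeper degeneracy) requires an explicit geometric analysis of the plane image and of the blow-ups, especially for the base cases of the induction.
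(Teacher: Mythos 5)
Your part (i) is essentially the argument behind the references the paper cites for this lemma: the paper's own proof of Lemma \ref{lemma:TD} consists entirely of the citation to \cite[\S 2]{CH} and \cite[Thm. (1.4)]{noteTD}, and your degree computation for the twisted normal sheaf, the identity $-K_{W'}-\widetilde{X}_{P_1}-\widetilde{X}_{P_2}\sim h+s_0-e_{y_1}-\cdots-e_{y_n}$, and the nonspeciality under the hypothesis $\M \cdot(h+s_0-e_{y_1}-\cdots-e_{y_n}) \geq 1$ are all correct. The one point needing more care is that when $\varphi$ is not an immersion the tangent space is controlled by the torsion-free quotient $\overline{\N}_{\varphi}$ rather than $\N_{\varphi}$ itself (the upper bound for $\dim V$ comes from $h^0$ of the twist of $\overline{\N}_{\varphi}$, the lower bound from the Euler characteristic of the twist of $\N_{\varphi}$, and they coincide under your $h^1$ vanishing); your parenthetical acknowledges this, and it is the standard fix.

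Part (ii), however, contains a genuine gap and also misreads the statement. First, $V_{\M}\neq\emptyset$ is a \emph{hypothesis} of the lemma, so the entire first step of your plan --- the degeneration-and-smoothing construction of an element of $V_{\M}$, which you yourself flag as the main obstacle --- is not needed here; in the paper's logic, nonemptiness for the relevant classes is supplied separately by Lemma \ref{lemma:checazzo!} inside the proof of Proposition \ref{prop:strafico}. Second, the actual content of (ii) is precisely the assertion you leave unproved: that on each component of $V_{\M}$ the loci where nodality, smoothness at the $x_i$, or transversality to $s_0$ fail are \emph{proper} closed subsets. Openness of the three conditions is clear, but ``$\dim V_{\M}\geq 3$, so there is enough room'' is not an argument: a priori a component could consist entirely of non-immersed curves, or of curves with a tacnode or a non-transverse intersection with $s_0$. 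The standard proof (again \cite[\S 2]{CH}, \cite{noteTD}) excludes this by a dimension count: if the general member of a component were non-immersed or worse than nodal, or behaved badly along $s_0$ or at the $x_i$, one could impose additional point or tangency conditions and conclude that the component has dimension strictly smaller than the value computed in (i), a contradiction; the strengthened hypothesis $\M \cdot(h+s_0-e_{y_1}-\cdots-e_{y_n}) \geq 4$ (rather than $\geq 1$) is exactly what keeps the relevant twisted sheaf nonspecial after imposing these extra conditions. Without some version of this argument, part (ii) remains unproven.
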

  
\begin{proof}  The result follows from  \cite[\S 2]{CH}, as outlined in \cite[Thm. (1.4)]{noteTD}.\end{proof} 

\begin{prop}\label{prop:strafico} 
Let $\alpha, \beta, \gamma_1,\ldots, \gamma_n$ be non--negative integers verifying conditions (i)--(iv) in Definition \ref {def:good}. Set $\M=(\alpha+\beta)h-\beta s_0-\sum \gamma_i e_{y_i}$. Then $V^*_\M$ is non--empty with all components of dimension $\alpha+2\beta-\sum \gamma_i-1$.
\end{prop}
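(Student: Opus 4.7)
The plan is to derive Proposition \ref{prop:strafico} from Lemma \ref{lemma:TD}, for which the only missing ingredient is to show $V_\M \neq \emptyset$. A direct intersection calculation on $W'$ gives
\[
\M \cdot (h+s_0-e_{y_1}-\cdots-e_{y_n}) \;=\; (\alpha+\beta)+\beta-\sum_i\gamma_i \;=\; \alpha+2\beta-\sum_i\gamma_i,
\]
which is $\geq 4$ by condition (iv) of Definition \ref{def:good}. Therefore Lemma \ref{lemma:TD}(i) yields the claimed dimension $\alpha+2\beta-\sum\gamma_i-1$ for each component of $V_\M$ (and hence of the open subset $V_\M^*$), while Lemma \ref{lemma:TD}(ii) gives $V_\M^* \neq \emptyset$, both conditional on the hypothesis $V_\M \neq \emptyset$, which thus suffices to establish.

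For the non-emptiness of $V_\M$, the plan is a degeneration--smoothing argument in the tradition of Caporaso--Harris \cite{CH}: exhibit a reducible rational curve $C_0 \in |\M|$ lying in the closure of $V_\M$ and then smooth it to an irreducible element of $V_\M$ using the results of \cite{noteTD}. Via the morphism $\phi:W' \to \mathbb{P}^2$, the class $\M$ corresponds to plane curves of degree $\alpha+\beta$ with multiplicity $\beta$ at $P_1+P_2$ and multiplicity $\gamma_i$ at $y_i$, so natural components for $C_0$ are: $\beta$ general lines through $P_1+P_2$ (realizing $\beta(h-s_0)$ and avoiding the $y_i$); copies of the line $h_0$ through all the $y_i$ and/or single lines through individual $y_i$ (to distribute the multiplicities $\gamma_i$, with conditions (ii) and (iii) providing the combinatorial room); and the distinguished tangent line $\ell$ to $\Gamma$ passing through $\phi(x_1)$ and $\phi(x_2)$, together with auxiliary components, to realize the $\alpha$-fold contact with $\widetilde{X}_{P_i}$ at $x_i$.

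The main obstacle will be the explicit construction of $C_0$ and the verification that it smooths to an irreducible curve while preserving the unibranch contact with each $\widetilde{X}_{P_i}$ at $x_i$. Realizing the full contact $\alpha$ may require non-reduced components or a careful arrangement of tangent lines, and the smoothing must not break the unibranch condition. Condition (iv) is decisive here: it ensures the expected dimension $\alpha+2\beta-\sum\gamma_i-1$ is at least $3$, which gives the moduli needed for the smoothing techniques of \cite{CH} and \cite{noteTD} to produce an irreducible nodal rational curve from $C_0$. Once $V_\M \neq \emptyset$ is established in this way, Lemma \ref{lemma:TD} immediately closes the argument.
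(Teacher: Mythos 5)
Your reduction to Lemma \ref{lemma:TD} is exactly right and matches the paper: the computation $\M\cdot(h+s_0-e_{y_1}-\cdots-e_{y_n})=(\alpha+\beta)+\beta-\sum\gamma_i=\alpha+2\beta-\sum\gamma_i\geq 4$ (condition (iv)) shows that both the dimension statement and the nonemptiness of $V^*_\M$ follow once $V_\M\neq\emptyset$. But that nonemptiness is the entire content of the proposition, and your proposal does not prove it: you outline a Caporaso--Harris style degeneration to a configuration of lines and then yourself flag the decisive difficulty (realizing the unibranch contact of order $\alpha=\M\cdot\widetilde{X}_{P_i}$ at $x_i$, possibly via non-reduced components, and smoothing without breaking it) without resolving it. This is a genuine gap. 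Concretely, the only line configurations whose intersection with $\widetilde{X}_{P_i}$ is concentrated at $x_i$ with the full multiplicity $\alpha$ use the line through $\phi(x_1)$ and $\phi(x_2)$ taken $\alpha$ times, hence a non-reduced $C_0$; neither Lemma \ref{lemma:TD} (which presupposes $V_\M\neq\emptyset$) nor the results of \cite{CH} and \cite{noteTD} as invoked there provide a smoothing of such a curve inside the locus of curves with prescribed unibranch $\alpha$-fold contact at two points, and the remark that condition (iv) ``gives the moduli needed for the smoothing'' is not an argument.

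The paper's existence proof is quite different and avoids smoothing altogether: Lemma \ref{lemma:checazzo!} constructs an \emph{irreducible} rational plane curve of degree $\alpha+\beta$ directly, by assembling a birational $g^2_d$ on $\PP^1$ from a degree-$(\alpha)$ cover of $\PP^1$ with two points of total ramification (these produce exactly the unibranch $\alpha$-fold contacts with $\ell_1$ and $\ell_2$ at $x_1$ and $x_2$) plus a suitable divisor producing the $\beta$-fold point at $y_{12}$ and the points of multiplicities $\gamma_i$ on $\ell_3$. There is then a second step your plan also omits: this construction does not let one prescribe the positions of the multiple points on $\ell_3$, nor impose $x_2=\omega(x_1)$ a priori, so the paper realizes the resulting blown-up plane as a special member $W'_0$ of a family $\mathcal W$ of such surfaces (the identification $\omega$ being furnished by the unique conic tangent to $\ell_1,\ell_2$ at $y_{13},y_{23}$ and to the line $x_1x_2$, cf.\ Remark \ref{rem:conic}), and then uses the dimension formula of Lemma \ref{lemma:TD} to propagate nonemptiness of $V^*_M$ from $W'_0$ to the general member, i.e.\ to $y_1,\ldots,y_n$ general on $h_0$. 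To complete your proof you would need both an actual construction replacing Lemma \ref{lemma:checazzo!} (or a genuine smoothing argument for your $C_0$ preserving the tangency conditions) and, if your construction does not place the $y_i$ generally, an analogous specialization argument.
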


In the proof of Proposition \ref {prop:strafico} we will need the following:

\begin{lemma}\label{lemma:checazzo!} Given three lines $\ell_1,\ell_2,\ell_3$ in the plane $\PP$ not passing through the same point, we  set $y_{ij}=\ell_i\cap \ell_j$ for $1 \leq i < j \leq 3$.  Fix integers $d> m\geq 0$, $n\geq 0$, $m_1,\ldots, m_n\geq 1$, such that 
$$
d\geq \sum_{i=1}^n m_i \quad \text{and}\quad d\geq m+m_i, \quad i=1,\ldots, n.
$$
Then there is a reduced and irreducible rational curve $\gamma$ in $\PP$ of degree $d$ with the following properties:
\begin{itemize}
\item $\gamma$ has a  point of multiplicity $m$ at $y_{12}$,
\item the pull--back on the normalization of $\gamma$ of the $g^1_{d-m}$ cut out by the lines through $y_{12}$ has two total ramification points  mapping to generic points $x_1 \in \ell_1$ and $x_2 \in \ell_2$ respectively (in particular different from $y_{12},y_{13},y_{23}$),
\item  $\gamma$ has $n$ points of multiplicities $m_1,\ldots, m_n$ that are pairwise distinct points
  on $\ell_3$ (in particular different from $y_{13}$ and $y_{23}$).
\end{itemize}
\end{lemma}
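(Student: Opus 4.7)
The plan is to construct $\gamma$ directly as the image of an explicit rational map $\phi:\PP^1\to\PP^2$ whose shape encodes the required tangencies and singularity at $y_{12},x_1,x_2$ by construction, and then to constrain the remaining parameters to produce the prescribed $m_i$-fold points on $\ell_3$. Choose coordinates on $\PP$ so that $\ell_i=\{X_{i-1}=0\}$, hence $y_{12}=[0{:}0{:}1]$, $y_{13}=[0{:}1{:}0]$, $y_{23}=[1{:}0{:}0]$, and write $x_1=[0{:}1{:}a]$, $x_2=[1{:}0{:}b]$ with $a,b\in\CC^*$. We look for $\phi$ of the form
\[
\phi(t)=\bigl[\,t^{d-m}h(t)\ :\ \lambda(t-1)^{d-m}h(t)\ :\ g(t)\,\bigr],
\]
where $h(t)=\prod_{j=1}^m(t-s_j)$ with distinct $s_1,\ldots,s_m \in \CC\setminus\{0,1\}$, $\lambda\in\CC^*$, and $g\in\CC[t]$ of degree $d$.

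This shape automatically produces $m$ distinct preimages of $y_{12}$ at the $s_j$ (giving an ordinary $m$-fold point of $\gamma$ at $y_{12}$), the divisors $\phi^*\ell_1=(d-m)\{0\}+\sum_j\{s_j\}$ and $\phi^*\ell_2=(d-m)\{1\}+\sum_j\{s_j\}$ (yielding contact $d-m$ with $\ell_1$ at $\phi(0)$ and with $\ell_2$ at $\phi(1)$), and, after cancellation of the common factor $h$, the $g^1_{d-m}$ cut out on the normalization $\PP^1$ by the lines through $y_{12}$ is the degree-$(d-m)$ cover $\pi_\lambda(t)=[t^{d-m}{:}\lambda(t-1)^{d-m}]$, which is totally ramified at $t=0$ and $t=1$. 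The normalizations $\phi(0)=x_1$ and $\phi(1)=x_2$ translate into the two linear constraints $g(0)=a\lambda(-1)^{d-m}h(0)$ and $g(1)=bh(1)$. To install the multi-points, pick distinct generic $z_1,\ldots,z_n\in\ell_3\setminus\{y_{13},y_{23}\}$; since $m_i\leq d-m$ by hypothesis, one can select $m_i$ distinct points $r_{i,1},\ldots,r_{i,m_i}$ in the $(d-m)$-element fiber $\pi_\lambda^{-1}(z_i)$, forming an effective divisor $D=\sum_{i,k}r_{i,k}$ of degree $\sum m_i\leq d$ on $\PP^1$, and impose $V(g)\supseteq D$.

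A dimension count in the joint parameter space $(\lambda,g)\in\CC^*\times\CC[t]_{\leq d}$ now yields existence: for each $\lambda$, the conditions $V(g)\supseteq D_\lambda$ and $g(1)=bh(1)$ cut out (for generic choices) a linear subspace of $g$'s of dimension $d-\sum m_i$, and imposing the remaining constraint $g(0)=a\lambda(-1)^{d-m}h(0)$ is one additional codimension, leaving the combined $(\lambda,g)$-dimension equal to $d-\sum m_i\geq 0$. The main obstacle is the boundary case $d=\sum m_i$: there $V(g)\supseteq D_\lambda$ determines $g$ up to scalar and the two value conditions become a single non-trivial algebraic equation in $\lambda$, solvable for some $\lambda\in\CC^*$ by genericity of $s_j,z_i$ and the choice of the $r_{i,k}\in\pi_\lambda^{-1}(z_i)$. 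For a generic solution $(\lambda,g)$, the map $\phi$ is birational onto its image (being injective near $t=0$, as $\phi(0)=x_1$ is a smooth point of $\gamma$), so $\gamma$ is an irreducible reduced rational curve of degree $d$; the ordinary character of the multiplicities at $y_{12}$ and at the $z_i$'s is an open genericity condition, satisfied automatically.
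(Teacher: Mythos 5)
Your construction takes a genuinely different, more explicit route than the paper's. The paper works intrinsically on $\PP^1$: it starts from a base-point-free $g^1_{d-m}$ with two points of total ramification, adds a general degree-$m$ divisor to get a $g^1_d$, and spans a birational base-point-free $g^2_d$ using an auxiliary divisor $E=\sum_{i,j}P_{ij}+F$ assembled from fibres of the $g^1_{d-m}$; the three lines and all incidences then fall out of the construction. Your parametrization $[t^{d-m}h:\lambda(t-1)^{d-m}h:g]$ is essentially this in coordinates (the pencil spanned by the first two entries is the $g^1_d$ with moving part $\pi_\lambda$, and $V(g)$ plays the role of $E$), and the verifications you give -- multiplicity $m$ at $y_{12}$ supported on the $s_j$, total ramification of $\pi_\lambda$ over $0$ and $1$ mapping to $x_1,x_2$, the linear interpolation conditions on $g$, and birationality via the reduced one-point fibre over $x_1$ -- are all correct in the range $\sum m_i<d$. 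The explicitness buys transparency; the paper's abstract version avoids coordinates and the case analysis below.

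There is, however, a genuine soft spot in the boundary case $d=\sum m_i$, which your argument does not close. There $g=c\prod_{i,k}(t-r_{i,k})$ is determined up to the scalar $c$; the condition $g(1)=bh(1)$ fixes $c$, and $g(0)=a\lambda(-1)^{d-m}h(0)$ becomes a single algebraic equation in $\lambda$ (and in the discrete choice of branches $r_{i,k}\in\pi_\lambda^{-1}(z_i)$). You assert solvability ``by genericity,'' but at that stage the $s_j$ and $z_i$ are already fixed, and you give no argument that the resulting function of $\lambda$ is non-constant, nor that a root exists in $\CC^*$ compatible with the other open conditions; worse, in this case you can no longer prescribe $a$ and $b$ freely, so the required \emph{genericity} of $x_1\in\ell_1$ and $x_2\in\ell_2$ is not established (this genericity is needed downstream, in the proof of Proposition \ref{prop:strafico}). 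Both issues are repaired by the device the paper uses at the end of its proof: do not impose the value of $a$ (or $b$) at all -- simply define $a:=g(0)/\bigl(\lambda(-1)^{d-m}h(0)\bigr)$ once $g$ is chosen -- and then move $x_1,x_2$ into generic position by the two-dimensional torus of projective transformations fixing $\ell_1,\ell_2,\ell_3$, which acts transitively on $\bigl(\ell_1\setminus\{y_{12},y_{13}\}\bigr)\times\bigl(\ell_2\setminus\{y_{12},y_{23}\}\bigr)$ while keeping the multiple points on $\ell_3$ pairwise distinct and away from $y_{13},y_{23}$. With that modification your proof is complete.
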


\begin{center}
\begin{tikzpicture}[scale=0.5]

\draw[blue,thick] (-10,10)  -- node[below] {$\ell_1$} (1,-1);
\draw[blue,thick] (-1,-1)  -- node[below] {$\ell_2$} (10,10);
\draw[blue,thick] (-10,8) -- node[above] {$\ell_3$} (10,8);

\filldraw (-8,8) circle (0.1) node [below=0.1cm,color=black] {$y_{13}$};
\filldraw (8,8) circle (0.1) node [below=0.1cm,color=black] {$y_{23}$};

\filldraw (-6,6) circle (0.1) node [below=0.1cm,color=black] {$x_1$};
\filldraw (3,3) circle (0.1) node [below=0.1cm,color=black] {$x_{2}$};

\draw[smooth,red,thick]
(-4,7) to[out=30,in=-125]
(-3,8) to[out=55,in=0] (-3.5,9.5) to[out=180,in=135] (-3,8) to[out=-45,in=180]
(-2,7) to[out=0,in=-125]
(-1,8) to[out=55,in=0]  (-1.5,9) to[out=180,in=240] (-1,8)
       to[out=60,in=30] (-1.5,10) to[out=210,in=90] (-1,8)
       to[out=-90,in=200]
(1,7) to[out=20,in=-125]   (2,8) to[out=55,in=0] (1.5,9.5) to[out=180,in=155] (2,8) to[out=-25,in=200]
(3,7.5) to[out=20,in=150] (4,10) to[out=-30,in=30] (4.5,9) to[out=210,in=-150]
(5,11) to[out=30,in=60] (6,7) to[out=-120,in=0] (5,6) to[out=180,in=180] (4,7) 
to[out=0,in=20] (5,5.5) to[out=-160,in=80] (3.5,4.5) to[out=-100,in=45] (3,3)
to[out=225,in=90] (0,0) to[out=-90,in=180] (1.5,-1.5) to[out=0,in=0] (0,0)
to[out=180,in=180] (-1.5,-1.5) 
to[out=0,in=-60] (0,0) 
to[out=120,in=200] (-1,2.5) 
to[out=20,in=90] (0,3)
to[out=-90,in=0] (-0.5,2)
to[out=180,in=-45] (-3,4)
to[out=135,in=-45] (-6,6)
to[out=135,in=200] (-5,7.5)
to[out=20,in=210] (-4,7)
;

\draw[red] (0,3.5) node {$\gamma$};

\filldraw (0,0) circle (0.1) node [below=0.2cm,color=black] {$y_{12}$};

\end{tikzpicture}

\end{center} 

\begin{proof} Set $\delta=d-m$. The assertion is trivial if $\delta=1$.  So we assume $\delta\geq 2$. Consider a morphism $f: \PP^1\to \PP^1$ of degree $\delta$ with two  points of total ramification, that is a $g^1_\delta$ with no base points. Fix a general effective divisor $D$ of degree $m$ on $\PP^1$, so that $g^1_\delta+D$ is a $g^1_d$. Fix $n$ general points $P_1,\ldots, P_n$ of $\PP^1$, and consider the fibres
$$
f^{-1}(P_i)=P_{i1}+\cdots +P_{i\delta}, \quad i=1,\ldots, n.
$$
Consider then the divisor
$$
E=\sum_{i=1}^n \sum_{j=1}^{m_i} P_{ij}+ F
$$
where $F$ is a general effective divisor of degree $d-\sum_{i=1}^nm_i$ on $\PP^1$. The divisor $E$  has no common point with the general divisor of $g^1_\delta+D$. Hence $E$ and $g^1_\delta+D$ span a $g^2_d$ with no base points. Moreover, this $g^2_d$ is birational. Indeed, if $g^2_d$ were composed with a $g^1_\nu$, then, by the generality of the divisor $D$,  the $g^1_\delta$ would have base points, a contradiction. 

Let $\gamma$ be the image of $\PP^1$ via the $g^2_d$. This is a rational plane curve of degree $d$, with a point  of multiplicity $m$ at a point $y_{12}$ of $\PP^2$. Moreover there are two lines $\ell_1,\ell_2$ passing through $y_{12}$, that each intersects $\gamma$ at one point apart from $y_{12}$, call it 
$x_1$ and $x_2$ respectively, where the $g^1_\delta$ has total ramification. Finally there is a third line $\ell_3$ that pulls back to $\PP^1$ to the divisor $E$. By the choices we made, this line does not pass through $x_1$ and $x_2$ and the divisors $\sum_{j=1}^{m_i} P_{ij}$, for $i=1,\ldots,n$, are contracted by the $g^2_d$ to $n$ distinct points on $\ell_3$ that have multiplicities $m_1,\ldots, m_n$. The genericity of $x_1,x_2$ can be achieved by acting with projective transformations of the plane fixing the lines $\ell_1,\ell_2,\ell_3$, which keep the points of multiplicities $m_1,\ldots, m_n$ pairwise distinct.
\end{proof}

\begin{proof}[Proof of Proposition \ref  {prop:strafico}]  Let $d=\alpha+\beta$, $m_i=\gamma_i$ and $m=\beta$. Consider the plane $\PP$ containing the curve $\gamma$ constructed in Lemma \ref {lemma:checazzo!}. Let us blow up the points $y_{12},y_{13},y_{23}$ and the $n$ points of multiplicities $m_1,\ldots, m_n$ on $\gamma$ along $\ell_3$. 

We will consider the family $\mathcal W$ of the surfaces $W'$ as above where the points $y_1,\ldots, y_n$ are no longer general but simply pairwise distinct. We call $b>0$ the dimension of the parameter space of this family. There is a line bundle $\mathcal M$ on $\mathcal W$ that restricts on each member of $\mathcal W$ to the line bundle $M$ as in the statement of the proposition. Accordingly we can consider the families $\mathcal V_{\mathcal M}$ and $\mathcal V_{\mathcal M}^*$ of all varieties  $V_{M}$ and $V_{M}^*$ as before.  

The blow--up at the beginning of the proof can be interpreted as a member $W'_0$ of $\mathcal W$ with $\omega(x_1)=x_2$, since there is a unique irreducible conic $\Gamma$ tangent to the lines $\ell_1,\ell_2$ at $y_{13}$ and $y_{23}$ respectively, and tangent also to the line joining the two points $x_1$ and $x_2$ (cf. Remark \ref {rem:conic}). We denote by $M_0$ the restriction of $\mathcal M$ to $W'_0$. 

 Lemma  \ref {lemma:checazzo!} implies that $V_{M_0}$ is non--empty, which by Lemma \ref {lemma:TD} implies in turn that  $V^*_{M_0}$ is non--empty, with all components of  the expected dimension $\alpha+2\beta-\sum \gamma_i-1$. This yields that $\mathcal V_{\mathcal M}^*$ is non--empty. Then its dimension is at least the expected dimension which is $\alpha+2\beta-\sum \gamma_i-1+b$ that is strictly larger than the dimension  
of  $V^*_{M_0}$. This implies that for $W'$ general in $\mathcal W$ the variety $V_{M}^*$ is non--empty of the expected dimension $\alpha+2\beta-\sum \gamma_i-1$, as wanted. \end{proof}

  \section{Proof of Proposition \ref {prop:R11}}\label{sec:proof}

Let us go back to $R$ and $\widetilde R=\Bl_{y_1,\ldots,y_n}(R)$, where $y_1,\ldots,y_n$ are general points on $T$, with exceptional divisors $\e_i$ over $y_i$, for $i=1,\ldots,n$. 

\begin{proof}[Proof of Proposition \ref{prop:R11}] As we already noted, condition (iv) in Definition \ref {def:good} of $(\star)$ is equivalent to $-K_{\widetilde R}\cdot L\geq 4$. Hence,  as remarked  for Theorem 
\ref {thm:R11} in the introduction, 
the statements about dimension and smoothness follow from  Proposition \ref{prop:sed}  once nonemptiness is proved. 
So it remains to prove nonemptiness. 

We prove the result by degeneration of $\widetilde R$ to $Y'$, as indicated in \S \ref {sec:lemmas}, from which we keep the notation. 

 On the surface $W^\prime$ consider 
\[ L_0\sim \alpha h+ \beta(h-s_0) - \sum \gamma_i e_{y_i}.\]
 
Denote by $\{L_0\}_{W'} \subset |L_0|$ the sublocus of $\omega$--compatible curves.

\begin{claim} \label{cl:ugua}
  $\dim (\{L_0\}_{W'})=\dim (|L_0|)-\alpha=
\frac{1}{2}\left(\alpha^2+\alpha-\sum \gamma_i-\sum \gamma_i^2\right)+\beta(\alpha+1)$.
\end{claim}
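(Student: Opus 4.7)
The plan is to prove two equalities separately: the dimension formula for $|L_0|$ (with $\alpha$ subtracted) and the statement that $\omega$-compatibility cuts out a subscheme of codimension exactly $\alpha$. Throughout I would use that $W'$ is the blow-up of $\PP^2$ at the $n+3$ distinct points $2P_1, 2P_2, P_1+P_2, y_1, \ldots, y_n$, with corresponding exceptional divisors $e_1, e_2, s_0, e_{y_1}, \ldots, e_{y_n}$.

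First I would compute $\dim|L_0|$. Since $L_0 \sim (\alpha+\beta)h - \beta s_0 - \sum \gamma_i e_{y_i}$ has zero coefficient on $e_1, e_2$, the linear system $|L_0|$ is identified via $\phi$ with the linear system on $\PP^2$ of plane curves of degree $\alpha+\beta$ having multiplicity $\geq \beta$ at $P_1+P_2$ and multiplicity $\geq \gamma_i$ at $y_i$, with no condition at $2P_1, 2P_2$. Under conditions (i)--(iv) of Definition \ref{def:good} and the generality of the $y_i$ on $h_0$, these multiplicity conditions are independent (as can be checked directly on $\PP^2$, or via Riemann--Roch on $W'$ together with the vanishings $h^1(L_0)=h^2(L_0)=0$), yielding
$$
h^0(W',L_0) = \binom{\alpha+\beta+2}{2} - \binom{\beta+1}{2} - \sum_{i=1}^n \binom{\gamma_i+1}{2},
$$
which simplifies to $\dim|L_0| = \tfrac{1}{2}(\alpha^2 + 3\alpha - \sum \gamma_i - \sum \gamma_i^2) + \beta(\alpha+1)$; subtracting $\alpha$ gives the second equality of the claim.

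Next I would address the $\omega$-compatibility. For $C \in |L_0|$ not containing $\widetilde{X}_{P_1}$ or $\widetilde{X}_{P_2}$, the relation $\widetilde{X}_{P_i} \sim h - s_0 - e_i$ gives $C \cdot \widetilde{X}_{P_i} = (\alpha+\beta) - \beta = \alpha$, so each $D_i := C \cap \widetilde{X}_{P_i}$ is a length-$\alpha$ scheme, and the condition $\omega_*(D_1) = D_2$ amounts to at most $\alpha$ linear conditions on $|L_0|$. This already gives $\dim\{L_0\}_{W'} \geq \dim|L_0| - \alpha$ by varying $D_1 \in \Sym^\alpha(\widetilde{X}_{P_1})$ (of dimension $\alpha$) with each fibre of dimension $\geq \dim|L_0| - 2\alpha$. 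For the reverse inequality I would establish the surjectivity of the joint restriction map
$$
\Psi\colon H^0(W', L_0) \longrightarrow H^0(\widetilde{X}_{P_1}, L_0|_{\widetilde{X}_{P_1}}) \oplus H^0(\widetilde{X}_{P_2}, L_0|_{\widetilde{X}_{P_2}}),
$$
or equivalently the vanishing $h^1(L_0 - \widetilde{X}_{P_1} - \widetilde{X}_{P_2}) = 0$. Granted this, $\{L_0\}_{W'}$ is the preimage in $|L_0|$ of the $\alpha$-codimensional graph $\Gamma_\omega \subset \Sym^\alpha(\widetilde{X}_{P_1}) \times \Sym^\alpha(\widetilde{X}_{P_2})$, so it has codimension exactly $\alpha$, proving the first equality.

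The main obstacle will be the pair of independence/vanishing statements above. Both reduce to explicit checks on $\PP^2$: one verifies that the imposed multiplicity conditions at $P_1+P_2$ and the $y_i$ are independent on plane curves of degree $\alpha+\beta$ (for the dimension count), and that the intersection divisors on the two lines $\phi(\widetilde{X}_{P_1})$ and $\phi(\widetilde{X}_{P_2})$ can be independently prescribed by some element of $|L_0|$ (for the surjectivity of $\Psi$). Both verifications rely crucially on the inequalities (i)--(iv) together with the generality of the $y_i$ on $h_0$.
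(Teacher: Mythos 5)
Your overall architecture matches the paper's: the second equality is Riemann--Roch on $W'$ plus vanishing of $h^1(L_0)$ and $h^2(L_0)$, and the first equality is obtained by fibering $\{L_0\}_{W'}$ over $\Sym^{\alpha}(\widetilde{X}_{P_1})$ with fibres of codimension $2\alpha$ in $|L_0|$. However, there is a genuine gap: the two cohomological inputs you defer to ``explicit checks on $\PP^2$'' are precisely the substantive content of the claim, and they are not routine. Note that $\sum\gamma_i$ may be as large as $\alpha+\beta$, i.e.\ equal to the degree of the plane curves, with the $\gamma_i$-fold points collinear on $h_0$ and an additional $\beta$-fold point at $P_1+P_2$; independence of these conditions (equivalently $h^1(L_0)=0$) is exactly what needs an argument, not a verification. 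The paper gets it by first invoking Proposition \ref{prop:strafico} to produce an \emph{irreducible} member of $|L_0|$ (which gives $h^1(-L_0)=h^1(L_0+K_{W'})=0$ at once), and then restricting to the reduced anticanonical cycle $A=h_0+e_1+e_2+\widetilde{X}_{P_1}+\widetilde{X}_{P_2}+s_0$, where $L_0\cdot A\ge 4$ yields $h^1(L_0)=h^2(L_0)=0$. Without some such input your dimension count for $|L_0|$ is unsupported.

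The same issue affects your codimension step more seriously. You propose to prove surjectivity of the joint restriction $\Psi$ via $h^1(L_0-\widetilde{X}_{P_1}-\widetilde{X}_{P_2})=0$. This is a different, and not obviously easier, vanishing: $L_0-\widetilde{X}_{P_1}-\widetilde{X}_{P_2}\sim(\alpha+\beta-2)h-(\beta-2)s_0+e_1+e_2-\sum\gamma_i e_{y_i}$, and after eliminating the positive exceptional coefficients one is left with plane curves of degree $\alpha+\beta-2$ against up to $\sum\gamma_i\le\alpha+\beta$ conditions along the line $h_0$, so $h_0$ can become a fixed component and the vanishing requires a genuine argument under hypotheses (i)--(iv). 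The paper sidesteps this by twisting down by the \emph{whole} cycle $A\supset\widetilde{X}_{P_1}\cup\widetilde{X}_{P_2}$ rather than by $\widetilde{X}_{P_1}+\widetilde{X}_{P_2}$ alone: since $A\sim -K_{W'}$, the kernel term in both restriction sequences is $L_0+K_{W'}$, whose $h^1$ vanishes for free once $|L_0|$ has an irreducible member, and the count of conditions imposed by $Z_1\cup Z_2$ is then carried out on the curve $A$ componentwise. If you want to complete your version, either prove your vanishing directly (checking the edge cases where $h_0$ is a fixed component) or adopt the paper's restriction to $A$; in either case the existence statement of Proposition \ref{prop:strafico} must enter, and your proposal nowhere uses it.
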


\begin{proof}[Proof of claim]
  By Proposition \ref{prop:strafico}, the linear system $|L_0|$ contains an irreducible curve. Since $h^1(\O_{W'})=0$, it therefore follows that
  \begin{equation}
    \label{eq:KWvan}
    h^1(-L_0)=h^1(L_0+K_{W'})=0.
  \end{equation}
Set $A:=h_0+e_1+e_2+\widetilde{X}_{P_1}+\widetilde{X}_{P_2}+s_0$. Then $A$ is a reduced cycle of rational curves, thus of arithmetic genus one, and it is anticanonical by \eqref{eq:kw}. Since $L_0 \cdot A \geq 4$ by condition (iv) of $(\star)$, it follows in particular that $|L_0+A|$ contains a reduced, connected member. It therefore follows that
 \begin{equation}
    \label{eq:KWvan2}
    h^1(-L_0-A)=h^1(L_0)=0 \; \; \mbox{and} \; \; h^0(-L_0-A)=h^2(L_0)=0.
  \end{equation} 
  In particular, using Riemann-Roch on $W'$, one computes that
  \[ \dim (|L_0|)=\frac{1}{2}\left(\alpha^2+3\alpha-\sum \gamma_i-\sum \gamma_i^2\right)+\beta(\alpha+1),\]
  thus proving the 
  right hand equality of the claim.

  We have left to prove that $\{L_0\}_{W'}$ has codimension $\alpha$ in $|L_0|$.

To this end, let $Z_1 \in \Sym^{\alpha}(\widetilde{X}_{P_1})$ be general, and set $Z_2:=\omega(Z_1) \in \Sym^{\alpha}(\widetilde{X}_{P_2})$. 
From the two restriction sequences
  \[
  \xymatrix{
    0 \ar[r] & L_0+K_{W'} \ar[r] & L_0  \ar[r] & L_0|_A \ar[r] & 0}\]
and
\[
  \xymatrix{
    0 \ar[r] & L_0+K_{W'} \ar[r] & L_0 \* \I_{Z_1 \cup Z_2}  \ar[r] & L_0|_A(-Z_1-Z_2) \ar[r] & 0},\]
together with \eqref{eq:KWvan}, we see that
\[ \codim\Big(\left\vert L_0 \* \I_{Z_1 \cup Z_2}\right\vert,\left\vert L_0\right\vert|\Big) = \codim\Big(\left\vert L_0|_A(-Z_1-Z_2)\right\vert,\left\vert L_0\right\vert\Big).\]
A standard computation involving restriction sequences to the various components of $A$ shows that the latter codimension is $2\alpha$. Therefore,
\[ \dim\left(\left\vert L_0 \* \I_{Z_1 \cup Z_2}\right\vert\right)=\dim (|L_0|)-2\alpha.\]
(This equality can also be obtained applying \cite[Thm. (1.4.0)]{noteTD}.)
Varying $Z_1 \in \Sym^{\alpha}(\widetilde{X}_{P_1})$, we obtain the whole of $\{L_0\}_{W'}$. Thus,
\[ \dim (\{L_0\}_{W'})= \dim\left(\left\vert L_0 \* \I_{Z_1 \cup Z_2}\right\vert\right)+\dim(\Sym^{\alpha}(\widetilde{X}_{P_1}))=
  \dim(|L_0|)-\alpha,\]
finishing the proof of the claim.
\end{proof}

Denote
by $\{L_0\}$ the locus of curves on $Y'=S' \cup \E$ of the form
$\sigma'(C) \cup  C_{\E}$, where $C$ is an element of $\{L_0\}_{W'}$ and $C_{\E}$ is  the union of fibers on $\E\cong \FF_1$ such that 
$C_{\E} \cap S'=\sigma^\prime (C)  \cap s_0$. Then there is a one-to-one correspondence between $\{L_0\}_{W'}$ and $\{L_0\}$. Thus,  by the last claim, 
\begin{equation} \label{eq:diml0}
  \dim (\{L_0\})=
\frac{1}{2}\left(\alpha^2+\alpha-\sum \gamma_i-\sum \gamma_i^2\right)+\beta(\alpha+1).
\end{equation}
Note that all members of $\{L_0\}$ are Cartier divisors on $Y'$.
Moreover, by Lemma \ref{lemma:w1}, the closure of the locus $\{L_0\}$ is (a component of) the limit of the algebraic system $\{L\}$ on $\widetilde{R}$ of curves of class $\alpha \s  +\beta \f -\sum \gamma_i \e_i$.  Since the anticanonical divisor on $\widetilde{R}$ is effective, we have $h^2(L)=h^0(K_{\widetilde{R}}-L)=0$, whence Riemann-Roch yields
\begin{eqnarray*}
  \dim \{L\} & = & \dim |L|+1=\chi(L)+h^1(L)
  \\
             & = &\frac{1}{2}L\cdot (L-K_{\widetilde{R}})+h^1(L) \\
   & = &\frac{1}{2}\left(\alpha^2+\alpha-\sum \gamma_i-\sum \gamma_i^2\right)+\beta(\alpha+1)+h^1(L).
                    \end{eqnarray*}
    By \eqref{eq:diml0} and semicontinuity, we must have $h^1(L)=0$ and 
\begin{equation} \label{eq:diml00}
  \dim (\{L_0\})=\dim (\{L\}).
  \end{equation}

Let $x_1$ and $x_2$ be as in \S \ref {sec:lemmas} and
pick a general $C$ in a component of $V^*_{L_0}$ in $W'$ (which is non--empty by Proposition \ref {prop:strafico}).  Then $C$ intersects $s_0$ transversely at $L_0 \cdot s_0=\beta$ distinct points.  Denote as above by $C_{\E}$ the union of the $\beta $ fibers on $\E$ such that $C_{\E} \cap s_0 =\sigma^\prime (C) \cap s_0$. 
Then 
$\sigma'(C) \cup  C_{\E}$  is a member of  $\{L_0\}$,  with an $\alpha$-tacnode at $\sigma'(x_1)=\sigma'(x_2)$, and nodal otherwise, stably equivalent to $\sigma'(C)$. Varying $x_1$, we obtain by Proposition \ref {prop:strafico} a family $\mathcal C$ of dimension
$\alpha+2\beta-\sum \gamma_i=-L\cdot K_{\widetilde R}$
of such curves, and this is the expected dimension of $V_{p_a(L)-1}^\xi(\widetilde{R})$.

Let $\delta_0$ be the number of nodes of $C$, which equals the number of 
singular points of 
$\sigma'(C)$  
on the smooth locus of $Y'$.
Then
\[
  \delta_0=p_a(L_0)=\frac {1}{2}\left(\alpha^2-3\alpha+\sum \gamma_i-\sum \gamma^2_i\right)+\beta(\alpha-1)+1.
\]

Grant for the moment the following\footnote{ From a deformation-theoretic point of view the claim asserts the smoothness of the {\it equisingular deformation locus} of $\sigma'(C) \cup  C_{\E}$, which is a dense open subset of $\C$, cf. \cite[Lemma 3.4]{gk}.}:
  \begin{claim} \label{cl:lisc}
    The family of curves in $\{L_0\}$ passing through the $\delta_0$ nodes of $\sigma'(C)$ and having an $(\alpha-1)$--tacnode at $\sigma'(x_1)=\sigma'(x_2)$ has codimension $\delta_0+\alpha -1$ in $\{L_0\}$, that is, it has dimension
$\alpha+2\beta-\sum \gamma_i=\dim \C$.
\end{claim}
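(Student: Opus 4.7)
Plan: I will transport the problem to $W'$ via the bijective correspondence of Lemma~\ref{lemma:w1} between members of $\{L_0\}$ and $\omega$-compatible curves in $|L_0|$ on $W'$. Under this correspondence the subfamily in the claim is identified with $\F_{W'} \subset |L_0|$, the family of $\omega$-compatible curves $D$ passing through the $\delta_0$ nodes of $C$ and satisfying $(D \cdot \widetilde{X}_{P_1})_{x_1} \geq \alpha - 1$; since $\omega(x_1) = x_2$, the analogous condition at $x_2$ is then automatic. A generic $D \in \F_{W'}$ meets $\widetilde{X}_{P_1}$ residually at a single point $z_1 \in \widetilde{X}_{P_1} \setminus \{x_1\}$ and $\widetilde{X}_{P_2}$ at $\omega(z_1)$, so I would stratify $\F_{W'}$ by $D \mapsto z_1$. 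For generic $z_1$, the fiber is the linear system $|L_0 \otimes \Ii_{Z_{z_1}}|$, where $Z_{z_1} \subset W'$ is the zero-dimensional subscheme of length $\delta_0 + 2\alpha$ composed of the $\delta_0$ nodes of $C$, the length-$(\alpha-1)$ infinitesimal neighborhood of $x_i$ on $\widetilde{X}_{P_i}$ for $i = 1, 2$, and the two reduced points $z_1, \omega(z_1)$.

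The desired codimension $\delta_0 + \alpha - 1$ in $\{L_0\}$ reduces to showing that $Z_{z_1}$ imposes independent conditions on $|L_0|$ for generic $z_1$, i.e., $h^1(W', L_0 \otimes \Ii_{Z_{z_1}}) = 0$. Granting this and using $h^1(L_0) = 0$ from the proof of Claim~\ref{cl:ugua}, the long exact sequence gives $\dim |L_0 \otimes \Ii_{Z_{z_1}}| = \dim |L_0| - \delta_0 - 2\alpha$; adding $1$ for the variation of $z_1$ along $\widetilde{X}_{P_1}$ and subtracting from $\dim \{L_0\} = \dim|L_0| - \alpha$ (Claim~\ref{cl:ugua}) yields the codimension asserted in the claim.

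To prove the cohomology vanishing I would peel off the contributions of $\widetilde{X}_{P_1}$ and $\widetilde{X}_{P_2}$ using the standard restriction short exact sequences. Since $\widetilde{X}_{P_i} \cong \PP^1$ and $L_0 \cdot \widetilde{X}_{P_i} = \alpha$ equals the length of $Z_{z_1}|_{\widetilde{X}_{P_i}}$, those restriction pieces contribute no $H^1$, and the problem reduces to the vanishing $h^1(W', (L_0 - \widetilde{X}_{P_1} - \widetilde{X}_{P_2}) \otimes \Ii_N) = 0$, where $N$ is the reduced subscheme of $\delta_0$ nodes of $C$. This residual statement expresses that the nodes of $C$ impose independent conditions on the residual linear system $|L_0 - \widetilde{X}_{P_1} - \widetilde{X}_{P_2}|$.

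The main obstacle will be establishing this residual vanishing, which requires showing that the $\delta_0$ nodes of the specific curve $C \in V^*_{L_0}$ produced by Proposition~\ref{prop:strafico} lie in sufficiently general position on $W'$. The natural route is via the adjunction sequence on $C$: since $C$ is rational and nodal with normalization $\PP^1$, the nodes become $\delta_0$ pairs of points, and the condition becomes a divisor-theoretic statement on $\PP^1$ for which the degree estimate is supplied by the positivity assumption~$(\star)$~(iv), namely $-L \cdot K_{\widetilde{R}} \geq 4$. Alternatively, a deformation argument propagating the genericity from a generic member of $V^*_{L_0}$, via Proposition~\ref{prop:strafico} and the explicit construction of Lemma~\ref{lemma:checazzo!}, can be used to verify the required independence of the node conditions.
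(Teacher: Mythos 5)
Your bookkeeping is correct and your reduction (transport to $W'$, stratify by the residual point $z_1\in\widetilde X_{P_1}$, peel off $\widetilde X_{P_1}$ and $\widetilde X_{P_2}$ using that $L_0\cdot\widetilde X_{P_i}=\alpha$ equals the length of the imposed scheme there) is sound as far as it goes. But the entire content of the claim is concentrated in the step you defer, namely the vanishing $h^1\bigl((L_0-\widetilde X_{P_1}-\widetilde X_{P_2})\otimes\I_N\bigr)=0$, and neither of your two suggestions for it works as stated. The ``adjunction sequence on $C$'' route runs into two concrete problems: first, $\I_N|_C$ is not locally free at the nodes, so the restriction sequence you would write is not the one you want; second, the natural kernel term is $\O_{W'}(L_0-\widetilde X_{P_1}-\widetilde X_{P_2}-C)=\O_{W'}(-\widetilde X_{P_1}-\widetilde X_{P_2})$, and since $\widetilde X_{P_1}$ and $\widetilde X_{P_2}$ are disjoint rational curves one gets $h^1(\O_{W'}(-\widetilde X_{P_1}-\widetilde X_{P_2}))=1$ from $0\to\O(-D)\to\O\to\O_D\to0$, so the vanishing does not propagate for free. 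The alternative ``deformation argument propagating genericity'' is not an argument at all as written: Proposition \ref{prop:strafico} gives you a curve $C$ in $V^*_{L_0}$ but says nothing about the position of its nodes relative to the linear system $|L_0-\widetilde X_{P_1}-\widetilde X_{P_2}|$.

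The paper's proof avoids having to prove any genericity of the nodes by choosing a different decomposition: it blows up $W'$ at $N$, so that $\I_N$ becomes the line bundle $\O(-E)$ and the strict transform $\widetilde C\sim q^*L_0-2E$ is a \emph{smooth rational} curve, and then restricts $(q^*L_0-E)\otimes\I_{Z_1\cup Z_2}$ to $\widetilde C$. The kernel is $\O_{W''}(E)$, whose cohomology is trivially $h^0=1$, $h^1=0$ (disjoint $(-1)$-curves over a surface with $h^1(\O)=0$), and the restriction to $\widetilde C\cong\PP^1$ is a line bundle of degree $L_0^2-2\delta_0-2\alpha+2=\alpha+2\beta-\sum\gamma_i\geq 4$, hence nonspecial. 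This single exact sequence handles the $\delta_0$ nodes and the two tangency conditions simultaneously and yields the dimension exactly, with no independence-of-conditions statement left to verify. If you want to salvage your approach, the fix is essentially to adopt this device: do not try to separate the node conditions from the curve $C$ that carries them; instead use $C$ (after resolving its nodes by a blow-up) as the divisor you restrict to.
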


Arguing 
as in \cite[Thm. 3.3,  Cor. 3.12  and proof of Thm. 1.1]{gk}\footnote{The setting in \cite{gk} is slightly different, as the central fibre in the degeneration is a transversal union of two smooth surfaces, whereas $S'$ in the present setting is singular. Moreover, the central fiber in the degeneration in \cite{gk} is regular, whence linear and algebraic equivalence coincide, which is not the case on $Y'$. However, the reasoning in \cite{gk} is local, so the proof goes through in the present setting as well. The two hypotheses (1)-(2) in
  \cite[Thm. 3.3]{gk} correspond, respectively, to  \eqref{eq:diml00} and the statement in Claim \ref{cl:lisc}.}, we may deform 
$Y'$ to $\widetilde R$ deforming the $\alpha$-tacnode  of $\sigma^\prime (C)$  to
$\alpha-1$ nodes, while preserving the $\delta_0$ nodes  and smoothing the nodes $\sigma'(C) \cap C_{\E}$. 
Thus  $\sigma'(C) \cup C_{\E}$  deforms to a curve algebraically equivalent to $L$ with $\delta$ nodes, where
$$
\delta = \delta_0+\alpha-1=\frac 12 \left(\alpha^2-\alpha+\sum  \gamma_i-\sum \gamma^2_i\right)+\beta(\alpha-1).
$$
One computes
$$
p_a(L)=\frac{1}{2}\left(L^2+L \cdot K_{\widetilde R}\right)+1=\delta+1.
$$
  This shows that $C$ has geometric genus one, as wanted.

  We have left to prove the claim.

  \begin{proof}[Proof of Claim \ref{cl:lisc}]
Let $\F$ be the family of curves in $\{L_0\}_{W'}$ passing through the $\delta_0$ nodes of $C$ and
being tangent to $\widetilde{X}_{P_i}$ at $x_i$ with order $\alpha-1$, for $i=1,2$.  The statement of the claim is equivalent to $\dim \F=\alpha+2\beta-\sum \gamma_i$.

Denoting by $N$ the scheme of the $\delta_0$ nodes of $C$ and by $Z_i=(\alpha-1)x_i$ the subscheme on $\widetilde{X}_i$, whence on $W'$, we have that $\F$ is the locus of $\omega$-compatible curves in
$|L_0 \* \I_{N \cup Z_1 \cup Z_2}|$, which has codimension one, as $L_0 \cdot \widetilde{X}_i=\alpha$. Thus,
\[
  \dim(\F)=\dim (|L_0 \* \I_{N \cup Z_1 \cup Z_2}|)-1.
\]
To compute this, let $q: W'' \to W'$ denote the blow--up of $W'$ at $N$ and denote the total exceptional divisor by $E$. Denote the inverse images of $Z_i$ by the same names. Then
\begin{equation}
  \label{eq:dimsubloc2}
  \dim(\F)=\dim \left(|(q^*L_0-E)  \* \I_{Z_1 \cup Z_2}|\right)-1.
\end{equation}
Let $\widetilde{C}$ be the strict transform of $C$ on $W''$, which is a smooth rational curve. Then $\widetilde{C} \sim q^*L_0-2E$. We therefore have a short exact sequence
\[
    0 \longrightarrow  \O_{W''}(E) \longrightarrow (q^*L_0-E)  \* \I_{Z_1 \cup Z_2}   \longrightarrow \O_{\widetilde{C}}(q^*L_0-E)(-(\alpha-1)(x_1+x_2))  \longrightarrow 0,\]
whence, from \eqref{eq:dimsubloc2}
 we have \begin{eqnarray*}
  \dim (\F) & = &  h^0((q^*L_0-E)  \* \I_{Z_1 \cup Z_2})-2  \\
                                           & = & h^0(\O_{\widetilde{C}}(q^*L_0-E)(-(\alpha-1)(x_1+x_2)))-1 \\
   & = & \deg(\O_{\widetilde{C}}(q^*L_0-E)(-(\alpha-1)(x_1+x_2)))
  \\
                                           & = & (q^*L_0-2E)(q^*L_0-E)-2(\alpha-1)\\
  & = & L_0^2-2\delta_0-2\alpha+2 \\
  & = & \alpha+2\beta-\sum \gamma_i,
\end{eqnarray*}
as desired.
\end{proof}

\noindent
The proof of Proposition \ref{prop:R11} is now complete.
\end{proof}

%
%

\end{document}